\newtheorem{lettertheorem}{Theorem}
\newtheorem*{samelettertheoremA}{ Theorem A}
\newtheorem*{samelettertheoremB}{ Theorem B}
\newtheorem{theorem}{Theorem}[section]
\newtheorem*{theorem*}{Theorem}
 \newtheorem{corollary}[theorem]{Corollary}
 \newtheorem{proposition}[theorem]{Propsosition}
 \newtheorem*{mainthm*}{Main Theorem}
\theoremstyle{definition}
\newtheorem{definition}[theorem]{Definition}
\newtheorem{example}[theorem]{Example}
\theoremstyle{remark}
\newtheorem{remark}[theorem]{Remark}
\definecolor{alert}{rgb}{0.8,0,0.3}
\newcommand{\alert}[1]{%
	\marginpar{%
		\ifodd\value{page} \raggedright \else \raggedleft \fi
		\footnotesize{\textcolor{alert}{#1}}
	}
}
\newcommand{\Ric}{{\rm Ric}}
\newcommand{\tr}{{\rm tr}}
\newcommand{\V}{{\rm V}}
\begin{document}

\title[Torsional Rigidity evolution under geometric flows]{ Evolution of the Torsional Rigidity under Geometric Flows}

\author{Vicent Gimeno i Garcia}
\address{Department of Mathematics, Universitat Jaume I-IMAC,   E-12071, 
Castell\'{o}, Spain}
%\curraddr{}
\email{gimenov@mat.uji.es}

\author{Fernán Gonz\' alez-Ib\' a\~nez}
\address{Department of Mathematics, Universitat Jaume I,   E-12071, 
Castell\'{o}, Spain}
%\curraddr{Munster}
\email{fgonzalez@uni-muenster.de}

\thanks{Research partially supported by the Research grant  PID2020-115930GA-100 funded by MCIN/ AEI /10.13039/50110001103 and  by the project AICO/2023/035 funded by Conselleria
d’Educació, Cultura, Universitats i Ocupació}

\subjclass[2010]{Primary 58C40, 35P15, (53A10)}

\keywords{Torsional Rigidity, Mean Exit Time, Ricci Flow, Inverse Mean Curvature Flow}

%\date{\today}

\dedicatory{}

%%%%%%%%%%%%%%%%%%
%Abstract
%%%%%%%%%%%%%%%%%%%%
\begin{abstract}This paper explores the behavior of the torsional rigidity of a precompact domain as the ambient manifold evolves under a geometric flow. Specifically, we derive bounds on torsional rigidity under the Ricci Flow for Heisenberg spaces and homogeneous spheres. Additionally, we establish bounds under the Inverse Mean Curvature Flow for strictly convex, free-boundary, disk-type hypersurfaces within a ball. In this latter case, by extending the analysis to the maximal existence time of the flow, we obtain inequalities of comparison  with the flat disk for both volume and torsional rigidity.
\end{abstract}

\maketitle
\tableofcontents

%%%%%%%%%%%%%%%%%%
%Section: Introduction
%%%%%%%%%%%%%%%%%%%%
\section{Introduction}\label{sec:intro}
For a given domain $\Omega\subset M^n$ in a $n$-dimensional Riemannian manifold $(M^n,g)$ we can define the torsional rigidity of this domain as the integral of the solution to the following Poisson problem with Dirichlet boundary condition, 
\begin{equation}\label{eq:unua}
    \left\lbrace
\begin{array}{rlcc}
\Delta_g E&=-1& {\rm in }& \Omega,\\
    {E}&=0&\text{ \rm in } &\partial\Omega.
\end{array}
\right.
\end{equation}
This problem has been widely studied in the last years, see for instance  \cite{mcdonald}, \cite{Grigoryan},\cite{MARKVORSEN_PALMER_2006} or \cite{erick}. 
For a deeper discussion of the meaning of the integral the reader is referred to section \ref{sec:torormean}. 

Since the essence of the problem is fundamentally related to the Riemannian metric, a natural question arises when studying this topic is what happens when changes  are made  to the metric tensor. There exist different ways to perform changes on the metric, for example conformal changes or geometric flows. In this article the spotlight will be  in geometric flow as, for instance,  Ricci and Inverse Mean Curvature flow. For a more detailed explanation we refer to section \ref{sec:GF}.

A  solution of a geometric flow is a continuous path in the space of metrics $g:[0,T_{\rm max})\to \mathcal{M}(M^n)$ that  follows a geometric equation, $\frac{\partial g}{\partial t}= F(t)$ satisfied up to a maximal time of existence of the flow denoted as $T_{\rm max}$. Thus for each $t\in [0,T_{max})$ the solution to the previously mentioned Poisson problem will be different. Hence we will obtain  a one-parameter family of solutions, $\{ E_t\}_{t\in [0,T_{max})}$ to the problem. The torsional rigidity of $\Omega$ at time $t$ will be obtained therefore by  the following integral
\begin{equation*}
   \mathcal{T}(\Omega_t):=\int_\Omega E_t\, d{\rm V}_t  
\end{equation*}
where $d{\rm V}_t$ is the volume element associated to $g_t$ and $E_t$ is the solution of \eqref{eq:unua} at time $t$. We can therefore  consider the torsional rigidity for a given domain as a function that depends on the time. 

Geometric flows determines the evolution of various properties associated with the Riemannian metric. In this paper, the controlled evolution of these quantities under the flow is leveraged to derive bounds on the torsional rigidity, utilizing the variational characterizations introduced in Section \ref{sec:bounds}.
In the particular case of the Ricci flow, focusing in manifolds with constant scalar curvature and bounded Ricci tensor we have obtained several bounds for the torsional rigidity. The result is summarized in the following theorem:

\begin{lettertheorem} \label{Thm:Ric}Let $(M^n,(g_t)_{t\in [0,T_{\rm max})})$ be a Ricci flow solution. Suppose that for any $t\in [0,T_{\rm max})$ and any $p\in M$
$$
 {\rm Scal}_{g_t}(p)=b(t).
$$
Suppose moreover that for any $t\in [0,T_{\rm max})$, any $p\in M$ and any $v\in T_pM$, there exists two functions $A,B:\mathbb{R}\to\mathbb{R}$ such that 
$$
A(t)g_t(v,v)\leq {\rm Ric}_{g_t}(v,v)\leq B(t)g_t(v,v). 
$$
Then for any precompact domain $\Omega\subset M$ with smooth boundary 
$$
\begin{aligned}
 e^{\int_0^t\left(-b(s)-2B(s)\right)ds}\mathcal{T}(\Omega_0)\leq\mathcal{T}(\Omega_t)\leq& e^{\int_0^t\left(-2A(s)-b(s)\right)ds}\mathcal{T}(\Omega_0). 
\end{aligned}
$$
Which implies
$$
\begin{aligned}
 e^{-2\int_0^tB(s)ds}\frac{\mathcal{T}(\Omega_0)}{\V(\Omega_0)}\leq\frac{\mathcal{T}(\Omega_t)}{\V(\Omega_t)}\leq& e^{-2\int_0^tA(s)ds}\frac{\mathcal{T}(\Omega_0)}{\V(\Omega_0)}. 
\end{aligned}
$$
Where here  ${\rm V}(\Omega_t)$ denotes the volume of $\Omega$ with respect to $g_t$.
\end{lettertheorem}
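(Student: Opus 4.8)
The plan is to deduce everything from the variational (Rayleigh-quotient) description of torsional rigidity recorded in Section~\ref{sec:bounds}: writing
\[
Q_t(u):=\frac{\left(\int_\Omega u\,dV_t\right)^2}{\int_\Omega |\nabla u|_{g_t}^2\,dV_t},\qquad 0\neq u\in H^1_0(\Omega),
\]
one has $\mathcal{T}(\Omega_t)=\sup_u Q_t(u)$, with the supremum attained at $u=E_t$ (the energy-functional form $\mathcal{T}(\Omega_t)=\sup_u\bigl(2\int_\Omega u\,dV_t-\int_\Omega|\nabla u|_{g_t}^2\,dV_t\bigr)$ would serve equally well). The key structural point is that for a precompact $\Omega$ with smooth boundary the class $H^1_0(\Omega)$ is, as a set, independent of $t$: over a compact subinterval of $[0,T_{\rm max})$ the metrics $g_t$ are smooth and uniformly comparable on $\overline\Omega$, so the induced Sobolev norms are pairwise equivalent. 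Hence a (near-)optimal function for one metric is an admissible competitor for any other, and the argument reduces to tracking how $dV_t$ and $|\nabla u|_{g_t}^2$ evolve along the flow.

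First I would compute the evolution of the two ingredients. From $\partial_t g=-2\Ric_{g_t}$ one gets $\partial_t\,dV_t=-\tfrac12\tr_{g_t}(2\Ric_{g_t})\,dV_t=-{\rm Scal}_{g_t}\,dV_t=-b(t)\,dV_t$; since $b$ is spatially constant this integrates to a uniform rescaling
\[
dV_t=e^{-\int_0^t b(s)\,ds}\,dV_0,\qquad\text{so in particular}\qquad \V(\Omega_t)=e^{-\int_0^t b(s)\,ds}\,\V(\Omega_0).
\]
For a fixed function $u$, the evolution $\partial_t g^{ij}=2\Ric_{g_t}^{\,ij}$ gives $\partial_t|\nabla u|_{g_t}^2=2\Ric_{g_t}(\nabla_{g_t}u,\nabla_{g_t}u)$, so the pinching hypothesis $A(t)g_t\le\Ric_{g_t}\le B(t)g_t$ turns this into the pointwise (a.e.) differential inequality $2A(t)\,|\nabla u|_{g_t}^2\le\partial_t|\nabla u|_{g_t}^2\le 2B(t)\,|\nabla u|_{g_t}^2$. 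Integrating (Gronwall, applied on $\{\nabla u\neq 0\}$, whose complement is preserved) yields, pointwise on $\Omega$,
\[
e^{2\int_0^t A(s)\,ds}\,|\nabla u|_{g_0}^2\ \le\ |\nabla u|_{g_t}^2\ \le\ e^{2\int_0^t B(s)\,ds}\,|\nabla u|_{g_0}^2.
\]

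Then I would assemble the estimate. Substituting the volume identity into $Q_t(u)$ collapses all volume factors to a single $e^{-\int_0^t b}$; using the lower pointwise bound on $|\nabla u|_{g_t}^2$ in the denominator gives $Q_t(u)\le e^{-\int_0^t(b+2A)}\,Q_0(u)$, and taking the supremum over $u$ produces the asserted upper bound $\mathcal{T}(\Omega_t)\le e^{\int_0^t(-2A-b)}\,\mathcal{T}(\Omega_0)$. Symmetrically, the upper pointwise bound gives $Q_t(u)\ge e^{-\int_0^t(b+2B)}\,Q_0(u)$; evaluating at the $g_0$-maximizer $u=E_0$ (admissible by the $t$-independence of $H^1_0(\Omega)$) gives $\mathcal{T}(\Omega_t)\ge Q_t(E_0)\ge e^{\int_0^t(-b-2B)}\,\mathcal{T}(\Omega_0)$. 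Finally, dividing both chains by $\V(\Omega_t)=e^{-\int_0^t b}\,\V(\Omega_0)$ cancels the scalar-curvature terms and yields the two normalized inequalities.

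No step here is deep. The points needing genuine (if routine) care are: justifying the differentiation of $|\nabla u|_{g_t}^2$ in $t$ under the integral and the Gronwall step, which is harmless since on $\overline\Omega$ all the relevant tensors are smooth and bounded for $t$ in compact subintervals; the verification that $H^1_0(\Omega)$ does not depend on $t$, so that the cross-substitution of test functions between the $g_0$- and $g_t$-problems is legitimate; and matching the normalization of whichever variational characterization is proved in Section~\ref{sec:bounds} with the one used above. I expect this last bookkeeping, together with the $t$-independence of the admissible class, to be the only real obstacle, and both are settled by precompactness of $\Omega$ and uniform comparability of the $g_t$ on $\overline\Omega$.
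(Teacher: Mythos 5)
Your proof is correct, and your lower bound is essentially the paper's: both test the time-$t$ Rayleigh quotient of Proposition~\ref{prop:upper_bounds_torsion} with the fixed function $E_0$ and use $\partial_t\Vert\nabla u\Vert_{g_t}^2=2\Ric_{g_t}(\nabla u,\nabla u)$ together with the volume evolution. Where you genuinely diverge is the upper bound. The paper does not use the sup characterization there at all: it invokes the dual description of Proposition~\ref{prop:lower_bounds_torsion}, $\mathcal{T}(\Omega_t)=\inf\{\int_\Omega\Vert X\Vert^2 dV_t:\ \mathrm{div}_{g_t}X=-1\}$, takes $X=\nabla^{g_0}E_0$, and uses the constancy of ${\rm Scal}_{g_t}$ in space to show (via $\partial_t\,\mathrm{div}_{g_t}X=-X({\rm Scal}_{g_t})=0$) that the constraint $\mathrm{div}_{g_t}X=-1$ is preserved, so this single competitor controls $\mathcal{T}(\Omega_t)$ from above, after which a Gronwall estimate on $\int_\Omega\Vert X\Vert^2_{g_t}dV_t$ finishes. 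You instead exploit the spatial constancy of the scalar curvature differently: it makes $dV_t=e^{-\int_0^t b}\,dV_0$ a uniform rescaling, so the volume factors cancel out of the Rayleigh quotient, the pointwise two-sided gradient comparison gives $Q_t(u)\le e^{-\int_0^t(b+2A)}Q_0(u)$ for \emph{every} admissible $u$, and taking the supremum yields the upper bound without ever introducing the vector-field characterization. Your route is more self-contained (only one variational principle, plus the metric-independence of the admissible class, which you rightly flag and justify); the paper's route is the one packaged as the general Theorems~\ref{teo:lower} and \ref{teo:upper}, which are stated for arbitrary flows with spatially constant $\tr_g(f)$ and are then reused verbatim for the inverse mean curvature flow, so it buys modularity across the rest of the paper. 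Under the hypotheses of Theorem~\ref{Thm:Ric} the two mechanisms give identical constants, and your bookkeeping (signs, the cancellation of $b$ in the normalized inequalities, $\V(\Omega_t)=e^{-\int_0^t b}\V(\Omega_0)$) matches the paper's conclusions.
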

To check the consistence of the above Theorem we can apply it to the case of Einstein manifolds $(M,g_0)$ with initial metric tensor satisfying $\Ric_{g_0}=\lambda g_0$. In this case applying the above theorem, see section \ref{sec:Einstein}, we obtain for any precomapct domain $\Omega\subset M$ with smooth boundary that
$$
    \mathcal{T}(\Omega_t)= \left(1-2\lambda t\right)^{\frac{n+2}{2}}\mathcal{T}(\Omega_0)  
$$
as corresponds to the evolutions of the metric tensor by  the form $g_t= (1-2\lambda t)g_0$.

 By applying Theorem \ref{Thm:Ric}, for manifolds whose Ricci flow satisfies that 
$
 {\rm Scal}_{g_t}(p)=b(t)
$, and the Ricci tensor is negative (or positive) definite along the flow we obtain, see corollary \ref{cor:Ric}, we obtain that the function 
$$
t\mapsto \frac{\mathcal{T}(\Omega_t)}{\V(\Omega_t)}
$$
is a non decreasing (non increasing for positive definite) function for $t\in [0,T_0]$, and for any precompact domain $\Omega$ with smooth boundary the function
$$
t\mapsto \frac{\mathcal{T}(\Omega_t)}{\V^3(\Omega_t)}
$$
is a non increasing (non decreasing respectively) function for $t\in [0,T_0]$.

 Theorem \ref{Thm:Ric} is  applied as well in this article  for the case of homogeneous spaces because since the Ricci flow preserves isometries, the manifold remains a homogeneous manifold along the flow, and the hypothesis on the scalar curvature of Theorem \ref{Thm:Ric} holds. In this case we focus on the Heisenberg group and on homogeneous spheres. In the case of the Heisenberg group ${\rm Nil}_3$, see section \ref{Heisenberg}, we obtain the for any precompact domain $\Omega\subset{\rm Nil}_3$ with smooth bounday, the function
 $$
t\mapsto \V(\Omega_t)\mathcal{T}(\Omega_t),
    $$
    is non decreasing on $t\in [0,\infty)$, and the function
    $$
t\mapsto \frac{\mathcal{T}(\Omega_t)}{\V(\Omega_t)^3},
    $$
    is non increasing on $t\in [0,\infty)$.
        
For the homogeneous spheres, see section \ref{sec:su2}, with initial parameters
    $
\epsilon A_0\leq C_0\leq B_0$ such that $ \delta:=\frac{B_0-\epsilon A_0}{\epsilon A_0}<1
$.
It has been computed that,
$$
\left[1-\frac{4(1+6\delta)t}{B_0}\right]^{\frac{5(1+\delta)^3}{2(1+6\delta)}}\mathcal{T}(\Omega_0)\leq \mathcal{T}(\Omega_t)\leq \left[1-\frac{4t}{B_0}\right]^{\frac{5(1-\delta)}{2(1+\delta)}}\mathcal{T}(\Omega_0),
$$
 for any $t<\min\left\{\frac{B_0}{4(1+6\delta)},T_{\rm max}\right\}$.
In the case of the inverse mean curvature flow, bounds for the torsional rigidity have been obtained for domains that satisfy the following conditions: they are totally bounded (i.e., precompact) and have a strictly convex image, $H > 0$. The time for which the hypersurface remains strictly convex, denoted by $T_{mc}$, has been studied by Lambert and Scheuer for strictly convex, free boundary disk-type hypersurfaces in the ball in \cite{Lambert2016inver-34677} or by Urbas in \cite{Urbas}, who considered a more general case of hypersurface evolution in $\mathbb{R}^{n+1}$. In the case of non-convex hypersurfaces we highlight the work of by Gerhardt in \cite{maxtime}. The bounds for the torsional rigidity in precompact domains in strictly convex hypersurfaces can be summarized in the following theorem, see section \ref{sec:IMCF},

\begin{lettertheorem}\label{teo:unua}
Let M be an n-dimensional manifold and let $\Omega\subset M$ be an open and precompact domain of $M$ with smooth boundary $\partial \Omega\neq \emptyset$. Then let
    $\varphi:M\times[0,T_{\rm mc})\to \mathbb{R}^{n+1}$ by an inverse mean curvature flow from $M$ to $\mathbb{R}^{n+1}$ such that $\varphi_t(M)=\Sigma$ is strictly  convex hypersurface of $\mathbb{R}^{n+1}$.Thence the function
    $$
 t\mapsto ({\rm V}(\Omega_t))^{-3}\cdot\mathcal{T}(\Omega_t)
    $$
    is non-increasing for $t\in [0,T_{\rm mc})$, and the function
    $$
 t\mapsto ({\rm V}(\Omega_t))^{-1}\cdot\mathcal{T}(\Omega_t)
    $$
    is non-decreasing for $t\in [0,T_{\rm mc})$. Where here  ${\rm V}(\Omega_t)$ denotes the volume of $\Omega$ with respect to $g_t$.
\end{lettertheorem}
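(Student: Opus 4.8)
The plan is to run everything through the variational characterisation of the torsional rigidity recorded in Section~\ref{sec:bounds}, namely
\[
\mathcal{T}(\Omega_t)=\sup_{u}\ \frac{\left(\int_\Omega u\, d\V_t\right)^2}{\int_\Omega |\nabla_t u|^2\, d\V_t},
\]
where the supremum runs over $u\in H^1_0(\Omega)$ with $\int_\Omega u\, d\V_t\neq 0$, $\nabla_t$ denotes the gradient of $g_t$, and the extremal is a multiple of $E_t$, which is positive. The key idea is to \emph{freeze} one such test function $u$ (independent of $t$), follow the time evolution of the Rayleigh-type quotient $Q_t(u):=\left(\int_\Omega u\, d\V_t\right)^2/\int_\Omega |\nabla_t u|^2\, d\V_t$, show that its logarithmic derivative stays in a $t$-independent interval, and only then pass to the supremum — this way one never has to prove that $\mathcal{T}(\Omega_t)$ is differentiable in $t$.

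First I would collect the evolution equations along the inverse mean curvature flow $\partial_t\varphi=H^{-1}\nu$. The induced metric obeys $\partial_t g_{ij}=\tfrac{2}{H}h_{ij}$, where $h_{ij}$ is the second fundamental form with respect to the outward normal, and, since $\tr_{g_t}h=H$, the volume element obeys $\partial_t(d\V_t)=d\V_t$; in particular $\V(\Omega_t)=e^{t}\,\V(\Omega_0)$. The first relation gives $\tfrac{d}{dt}\log\left(\int_\Omega u\, d\V_t\right)^2=2$. For the denominator I would differentiate $|\nabla_t u|^2=g_t^{ij}\partial_i u\,\partial_j u$ using $\partial_t g_t^{ij}=-\tfrac{2}{H}g_t^{ik}g_t^{jl}h_{kl}$, obtaining
\[
\partial_t\,|\nabla_t u|^2=-\tfrac{2}{H}\,g_t^{ik}g_t^{jl}h_{kl}\,\partial_i u\,\partial_j u .
\]
This is the point where strict convexity is used: at each point, in an orthonormal frame diagonalising $h$ with principal curvatures $\kappa_1,\dots,\kappa_n>0$, the right-hand side equals $-\sum_i \tfrac{2\kappa_i}{H}(\partial_i u)^2$, and since $0<\kappa_i\le H=\sum_j\kappa_j$ one gets the pointwise two-sided estimate $-2\,|\nabla_t u|^2\le \partial_t|\nabla_t u|^2\le 0$. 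Integrating this against $d\V_t$ and adding the contribution of $\partial_t(d\V_t)=d\V_t$ yields $\tfrac{d}{dt}\log\int_\Omega|\nabla_t u|^2\, d\V_t\in[-1,1]$, hence $\tfrac{d}{dt}\log Q_t(u)\in[1,3]$ for every frozen admissible $u$.

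To finish, fix $t_1<t_2$ in $[0,T_{\mathrm{mc}})$. Integrating the bound on $\tfrac{d}{dt}\log Q_t(u)$ in $t$ gives $e^{t_2-t_1}Q_{t_1}(u)\le Q_{t_2}(u)\le e^{3(t_2-t_1)}Q_{t_1}(u)$ for every admissible $u$; since $e^{c}>0$, taking the supremum over $u$ preserves both one-sided inequalities, so by the variational characterisation
\[
e^{t_2-t_1}\,\mathcal{T}(\Omega_{t_1})\ \le\ \mathcal{T}(\Omega_{t_2})\ \le\ e^{3(t_2-t_1)}\,\mathcal{T}(\Omega_{t_1}).
\]
Dividing the left inequality by $\V(\Omega_{t_2})=e^{t_2}\V(\Omega_0)$ shows that $t\mapsto \V(\Omega_t)^{-1}\mathcal{T}(\Omega_t)$ is non-decreasing, and dividing the right inequality by $\V(\Omega_{t_2})^{3}=e^{3t_2}\V(\Omega_0)^3$ shows that $t\mapsto \V(\Omega_t)^{-3}\mathcal{T}(\Omega_t)$ is non-increasing, which are the two assertions.

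The hard part will be the denominator estimate — that is, recognising that strict convexity is exactly what pins the coefficients $\tfrac{2\kappa_i}{H}$ inside $[0,2]$ and so traps $\partial_t|\nabla_t u|^2$ between $-2|\nabla_t u|^2$ and $0$; if $H$ could vanish or the principal curvatures could have mixed signs, this two-sided control would break down. The remaining points should be routine: smoothness of $g_t$ on the compact $\overline\Omega$ legitimises differentiation under the integral sign, and restricting the supremum to test functions with $\int_\Omega u\, d\V_t\neq 0$ costs nothing because that sign is preserved along the flow and $E_t>0$. Alternatively one could differentiate $\mathcal{T}(\Omega_t)$ directly via the envelope (Danskin) theorem, valid since $E_t$ depends smoothly on $t$ by parabolic regularity, but the ``freeze $u$, then take the supremum'' route is cleaner.
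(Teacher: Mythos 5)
Your argument is correct, and it reaches the two-sided estimate $e^{t_2-t_1}\mathcal{T}(\Omega_{t_1})\leq\mathcal{T}(\Omega_{t_2})\leq e^{3(t_2-t_1)}\mathcal{T}(\Omega_{t_1})$ from the same convexity input the paper uses ($0<h(v,v)/H\leq 1$ and $\operatorname{tr}_g f=2$), but by a partly different mechanism. For the lower bound you do essentially what the paper does: freeze a test function in the Rayleigh-quotient (sup) characterization of Proposition \ref{prop:upper_bounds_torsion} and track the quotient via the evolution equations of Proposition \ref{prop:general} (this is Theorem \ref{teo:lower} specialized to IMCF, where the paper freezes the initial exit-time function $E_0$). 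For the upper bound, however, the paper does not use the sup characterization at all: it passes to the dual inf characterization of Proposition \ref{prop:lower_bounds_torsion}, freezes the vector field $X=\nabla^{g_0}E_0$, and exploits that $\operatorname{div}_{g_t}X\equiv -1$ is preserved along the flow because $\operatorname{tr}_g f$ is spatially constant (Theorem \ref{teo:upper}). You instead extract the upper bound from the sup characterization alone, by observing that your two-sided control $e^{t_2-t_1}Q_{t_1}(u)\leq Q_{t_2}(u)\leq e^{3(t_2-t_1)}Q_{t_1}(u)$ holds with constants uniform in the (time-independent) test function $u$, so the supremum can be taken on both sides; this works because the numerator scales exactly ($d\V_t=e^{t}d\V_0$ pointwise, again using that the trace is spatially constant), so sign-changing $u$ causes no trouble. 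The trade-off: your route is self-contained, avoids the vector-field/divergence-preservation machinery entirely, and gives monotonicity directly between arbitrary $t_1<t_2$ without differentiating $\mathcal{T}(\Omega_t)$; the paper's route packages the two bounds as the general Theorems \ref{teo:lower} and \ref{teo:upper}, which are reused verbatim for the Ricci-flow applications, and its dual upper bound does not require uniformity of the estimate over a whole class of test objects, only control of the single frozen field. Both arguments need the spatial constancy of $\operatorname{tr}_g f$ for the upper bound, so neither is more general here.
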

In the case $\Sigma$ be a strictly convex free boundary disk-type hypersurface  in the ball $\mathbb{B}\subset \mathbb{R}^{n+1}$, as in definition \ref{def:fb}, inspired in the work of Pak Tung Ho and Juncheol Pyo, in \cite{Ho-Pyo}, taking into account that $\Sigma$ converges to the flat disk when $t$ tends to $T_{\rm max}$, it has been obtained bounds for the torsional rigidity for this family of hypersurfaces in comparison with the flat disk as follows,
$$
\frac{\mathcal{T}(\Sigma)}{\V^3(\Sigma)}\geq \frac{\mathcal{T}(\mathbb{D})}{\V^3(\mathbb{D})},\quad \frac{\mathcal{T}(\Sigma)}{\V(\Sigma)}\leq \frac{\mathcal{T}(\mathbb{D})}{\V(\mathbb{D})},
$$
where $\V(\mathbb{D})$ and $\mathcal{T}(\mathbb{D})$ are the volume and the torsional rigidity of the unit disk in $\mathbb{R}^n$, and $\V(\Sigma)$ denotes the volume of the hypersurface.   Then
    $$
\V(\Sigma)\leq \V(\mathbb{D}),\quad   {\rm and}\quad \mathcal{T}(\Sigma)\leq \mathcal{T}(\mathbb{D}).
    $$
We must remark here that the assumption that the mean curvature is strictly positive $H>0$ is crucial for deriving our results. In contrast, for minimal ($H=0$) hypersurfaces $P\subset \mathbb{R}^{n+1}$ with $\vec{0}\in P$, by applying the results established by Markvorsen and Palmer in \cite{MARKVORSEN_PALMER_2006} (see remark 2.2 in their paper), it can be concluded for the region $\Sigma=P\cap \mathbb{B}$ contained within $\mathbb{B}$ that:
    $$
\V(\Sigma)\geq \V(\mathbb{D}),\quad {\rm and}\quad    
\mathcal{T}(\Sigma)\geq \mathcal{T}(\mathbb{D}).
    $$
Thus, our results for hypersurfaces with strictly positive mean curvature can be interpreted as the inverse of those obtained by Markvorsen and Palmer concerning minimal hypersurfaces.

\subsection{Plan of the paper }
 The structure of the paper is as follows: In section \ref{sec:torormean} the relationship between the torsional rigidity and the mean exit time is discussed.
In section \ref{sec:bounds} we will introduce two variational approaches for the torsional rigidity which will be used along the paper to obtain  upper and lower bounds for the torsional rigidity. In section \ref{sec:GF} geometric flows and related inequalities for the evolution of the trosional rigidity will be introduced. As a particular case, in section \ref{sec:cinc},  Theorems \ref{Thm:Ric} and \ref{teo:unua} will be proved for the case of Rici flow and Inverse Mean Curvature flow respectively. Finally, in section \ref{sec:examples} we will present some examples of application.

\section{Poisson problem: two names one function. }\label{sec:torormean} 
In classical literature, the solution to the problem \eqref{eq:unua} can be approached from two perspectives: either as an auxiliary function whose integral yields the torsional function, or as the mean exit time function for the Brownian motion. In the following paragraphs, both concepts will be introduced, and the Poisson problem will be derived from each perspective.

The first point of view is related to the theory of elasticity. Where the integral of the solution from the afforded mentioned problem give us the torque required when twisting an elastic beam with the domain of study as  uniform cross section. The second one is related to Brownian motion on domains. The solution of the problem \eqref{eq:unua} can be seen as the expected time for a Brownian particle starting at a point $x\in \Omega$ to reach the boundary of the domain.

\subsection{The Poisson problem as torsional rigidity }
Following the idea of Landau and Lipschitz in \cite{LL} and the articles of Polya \textit{et. al.}  \cite{PolyaSV}, \cite{Plya1948TorsionalRP} and \cite{Polya_Variational}, a beam of length \(L \in (0, \infty)\) can be modeled as \(\mathcal{B} = \Omega \times [0, L] \subset \mathbb{R}^3\). Where \(\Omega \subset \mathbb{R}^2\) is a bounded, simply connected and open set representing the cross-section.
  In this model, a coordinate system where the \(z\)-axis corresponds to the beam's length, and the \(x\)- and \(y\)-axes represent the coordinates within \(\Omega\) can be defined.
In the twisting,  the beam remains straight, but each cross-sectional slice (i.e., each copy of \(\Omega\)) undergoes a rotation of \(\theta\) radians per unit length.

The stress energy tensor, see \cite{LL},  can be described in terms of the deformation tensor as follows,
$$
\sigma_{xx}=\sigma_{yy}=\sigma_{zz}=0=\sigma_{xy},\quad \sigma_{xz}= 2\mu \frac{\theta}{2}\bigg( \frac{\partial \Psi}{\partial x} -y \bigg)\quad {\rm and} \quad \sigma_{yz}= 2\mu \frac{\theta}{2}\bigg( \frac{\partial \Psi}{\partial y} +x \bigg).
$$
Where $\Psi(x,y)$ is a function that gives the vertical displacement under the twist and $\mu$ is the modulus of the transversal elasticity that here and so on we will take it as $\mu=1$. At this point it is convenient to utilize an auxiliary function $\chi(x,y)$ defined as follows, 
$$
\sigma_{xz}= 2\mu\theta \frac{\partial \chi}{\partial y} \quad and \quad \sigma_{yz}= -2\mu\theta \frac{\partial \chi}{\partial x}.
$$
Thence, $
\frac{\partial \Psi}{\partial x}= y+2\frac{\partial \chi}{\partial x}\quad and \quad
\frac{\partial \Psi}{\partial y}= -x-2\frac{\partial \chi}{\partial y}
$. Finally, taking the derivatives $\frac{\partial^2\chi}{\partial x^2}$, $\frac{\partial^2\chi}{\partial y^2}$ and assuming that in the boundary the  forces of torsion are much smaller than the interior of the domain we obtain 
\begin{equation*}
    \left\lbrace
\begin{aligned}
    \Delta \chi=&-1 \text{ in } \Omega,\\
    {\chi}=&0\text{ in } \partial\Omega.
\end{aligned}
\right.
\end{equation*}
Which implies that the total energy of the beam will be
 $$
E(\mathcal{B})= \int^L_0 \int_\Omega (\nabla \chi)^2 4 \theta^2 d\mu_\Omega dz,
 $$
and the resulted torque due to the stress function $\mathcal{T}(\Omega)$, \emph{i.e.},  the  torsional rigidity, is proportional to
$$
\int_\Omega \chi d\mu_\Omega.
$$

\subsection{The Poisson problem as mean exit time} 
In this subsection we will follow the construction of a Brownian motion modeled by a diffusion process as Alexander Grigor’yan and  Telcs did in \cite{gg2} and then the mean exit time function will be expressed as the solution of Poison problem \eqref{eq:unua} as was stated by Dynkin in \cite{Dynkin}.  

Let $(M,g)$ be a connected Riemannian manifold equipped with the volume form $d\V_g$.  
The associated Dirichlet form will be used to construct a diffusion process. This  is a way to model Markov process. For a better understanding of the Dirichlet form and the associated Markov process we refer to \cite{FukushimaOshimaTakeda+2010}.

Recall that in a Riemannian manifold the set of functions $H^1(M)$  given by 
$$H^1(M):=\left\{ f\in L^2(M)\, :\, \Vert\nabla f\Vert\in L^2(M^n)\right\}$$ 
is a dense subset of $L^2(M)$ and a Dirichlet form can be defined as the fowolling bilinear map  
\begin{equation*}
\begin{aligned}
\mathcal{E}:H^1(M)\times H^1(M) \to L^2(M),\quad    (f,h)\mapsto& \mathcal{E}(f,h):=\int_M g(\nabla f, \nabla h)\ d\V_g.
\end{aligned}
\end{equation*}
This bilinear map is nonnegative definite, closed, satisfies the Markovian property, and can be generated using the Laplace-Beltrami operator $\Delta_g$ in $L^2(M)$. Indeed, the   domain ${\rm dom}(\Delta_g)$ is precisely the dense subset of $H^1(M)$ given by
$$
{\rm dom}(\Delta_g)=\left\{f\in H^1(M)\,:\,\mathcal{E}(f,h)=\int_M h\Delta_g f\ d\V_g,\quad \forall h\in H^1(M)\right\}.$$

Every Dirichlet form, see Theorem 7.2.1 of \cite{FukushimaOshimaTakeda+2010}, has  associated a diffusion process $\{\{X_t\}_{t\geq 0},\{\mathbb{P}_x\}_{x\in M }\}$  with the initial point $x\in M$. This  diffusion process is associated with the heat semi-group, in general known as transition semi-group, $P:= \{e^{t\Delta_g}\}_{t\geq 0}$\footnote{There exist two conventions at the moment of defining the heat semi-group. The first one is using that $P$ is generated by the usual Laplace-Beltrami operator and the second one is that $P$ is generated by $\frac{\Delta_g}{2}.$ This decision will have consequences at the moment of setting up the Poisson problem. In this article the first one is chosen. } as follows: for any $f\in L^2(M)$ and $x\in M$,
$$ \mathbb{E}_xf(X_t) = P_t(f(x)).$$
The transformation $P_t(f)(x)$ of the function $f$  can be obtained as a convolution of the heat kernel $p_t$ in an integral form as 
$$P_t(f)(x)= \int_M p_t(x,y)f(y)d\V_g(y).$$
 
This Markovian process, derived form the Dirichlet form, is known as Brownian motion.
\begin{definition}Let $(M,g)$ be a Riemannian manifold.
The Brownian motion on $(M,g)$ is the unique Markov process $\{X_t\}$ with state space $M$ with measure $d\V_g$ and transition densities $p_t(x,\Omega)= \int_{\Omega} p_t(x,y)d\V_g(y)$.
\end{definition}
A natural question that arises when studying Brownian motion in a domain $\Omega$ concerns the first time $\tau_p$ when the Brownian motion will leave the  domain $\Omega$, starting at a point $p \in \Omega$. To estimate this first exit time, one can consider the diffusion process terminated, ``killed'', when it  reaches the boundary of $\Omega$. 

The relationship between the elements of the transition semigroup and the expected value of the first exit time is given by:
$$
P^\Omega_t(f)(x) = \mathbb{E}_x (\mathbf{1}_{t < \tau_\Omega} f(X_t)),\quad\mathbf{1}_{t < \tau_\Omega}:=\left\{\begin{array}{cc}
  1   & t < \tau_\Omega,\\
  0   & t\geq \tau_\Omega.
\end{array}\right.
$$
Roughly speaking, $\mathbf{1}_{t < \tau_\Omega}$ is forcing  the Brownian particle to remain within $\Omega$ up to time $t$. This can also be expressed as a convolution with a heat kernel:
$$
P^\Omega_t(f)(x) = \int_\Omega f(y) p_t^\Omega(x, y) \, d\mathrm{V}_g,
$$
but now  $p_t^\Omega$ is the heat kernel of $\Omega$ with Dirichlet boundary condition. For any measurable function in $\Omega$, by using the Green operator in $\Omega$ we have that, 
$$
\mathbb{E}_x\left(\int_{0}^{\tau_\Omega} f(X_t)dt\right)=\int_{0}^\infty P^\Omega_t(f)(x) \, dt=:G^\Omega(f)(x).
$$
In particular setting $f(x)=1$ we obtain the mean exit time function $\mathbb{E}_x(\tau_\Omega)$ as
\begin{equation}
   \mathbb{E}_x(\tau_\Omega)=G^\Omega(1)(x).
\end{equation}
Therefore, see for instance  \cite{Bessa2020} for the properties of the Green operator, we obtain the following characterization of the mean exit time function $\mathbb{E}_x(\tau_\Omega)$  as  the desired Poisson problem with Dirichlet boundary condition 
\begin{equation*}
    \left\lbrace
\begin{aligned}
    \Delta  \mathbb{E}_x(\tau_\Omega) =&-1 \text{ in } \Omega,\\
    { \mathbb{E}_x(\tau_\Omega)}=&0\text{ in } \partial\Omega.
\end{aligned}
\right.
\end{equation*}

\begin{example}\label{ex1}
    Let $(\mathbb{R}^n,g_\mathbb{E})$ with $n\geq 2$ be the Euclidean space with the canonical  Euclidean metric. Let the domain of study $\Omega = D_{1}(\vec{0})$  be the disk of radius $1$ centered at $\vec{0}\in \mathbb{R}^n$. The mean exit time for this domain will be 
    \begin{equation}\label{eq:tria}
 \mathbb{E}_x(\tau_\Omega)=  \frac{1}{2n}\left(1-\Vert x\Vert ^2\right).      
    \end{equation}

 For the case $n>1$ this can be proved by using the Green function of the disk $g^{\Omega}(x,y)=\int_0^\infty p_t^\Omega(x,y)dt$ and taking into account the the problem is rotationally symmetric, see \cite{Bessa2020} for radial Green functions in rotationaly symmetric balls, as  follows
  \begin{equation*}
    \begin{aligned}
        \mathbb{E}_{x}(\tau_\Omega)=& G^\Omega(1)= \int_{D_{1}(0)} g^\Omega(x,y) d\V_{g_E}(y)\\
        =& \int_{\Vert x\Vert}^1 \frac{t}{n}dt = \frac{1}{2n}\left(1-\Vert x\Vert^2\right).
    \end{aligned}
    \end{equation*}
Observe that \eqref{eq:tria} can be proved in an alternative way by taking into account that
    $$
\Delta \mathbb{E}_x(\tau_\Omega)=\left(\frac{\partial^2}{\partial x_1^2}+\cdots+\frac{\partial^2}{\partial x_n^2}\right)\frac{1-x_1^2-\cdots-x_n^2}{2n}=-1,
    $$
and $\mathbb{E}_x(\tau_\Omega)=0$  for   $x\in \partial \Omega$.
  \end{example}

\section{Two variational approaches for the torsional rigidity}\label{sec:bounds}

In order to study the evolution of the torsional rigidity of a domain within a Riemannian manifold as the metric tensor evolves under a geometric flow, it will became necessary to characterize the torsional rigidity through two variational approaches. Specifically, in Proposition \ref{prop:upper_bounds_torsion}, we will express the torsional rigidity as the supremum of a functional defined over certain functions within the precompact domain. Furthermore, in Proposition \ref{prop:lower_bounds_torsion}, we will represent it as the infimum of the energy associated with specific vector fields defined on the domain.

The first variational approach is a generalization of a variational approach made by Polya in the paper \cite{Polya_Variational}. By making use that the torsional rigidity problem arise from a Poisson problem, we can rewrite it as the critical point of an energy functional in the space of smooth functions $u\in C^\infty(\Omega)$ with $u\big|_{\partial \Omega}=0$. The variational characterization of the torsional rigidity introduced by Polya for domains in the $2$-dimensional Euclidean space $\mathbb{R}^2$ is
\begin{equation}\label{eq:torsionalvariation}
     \mathcal{T}(\Omega)= \sup \left\{ \frac{\left(\int_\Omega  f\, { d} {\rm A}\right)^2}{\int_\Omega \Vert \nabla f\Vert^2\, { d}{\rm A}} \, :\, f\in C^\infty(\Omega),\, f|_{\partial\Omega = 0}\right\}\cdot
\end{equation}
The main idea of the proof is that assuming  $f\in C^\infty(\Omega)$ and $f\big|_{\partial \Omega}=0$, the integral of $f$ can be bounded in terms of  the Dirichlet form because  
\begin{equation*}
    \begin{aligned}
        \mathcal{E}(f, \chi) & =\iint_\Omega\left(f_x \chi_x+f_y \chi_y\right) d A \\
        & =\iint_\Omega\left[\frac{\partial\left(f \chi_x\right)}{\partial x}+\frac{\partial\left(f \chi_y\right)}{\partial y}+2 f\right] dA \\
        & = \iint_\Omega f dA+\int_{\partial \Omega} f \frac{\partial \chi}{\partial n} dL \\
        & = \iint_\Omega f d A,
        \end{aligned}
\end{equation*}
where $\chi$ is the auxiliary function defined in the section 2.1. This implies that
$$
\left(\int_\Omega  f\, { d}{\rm A}\right)^2=\left(\mathcal{E}(f,\chi)\right)^2\leq \mathcal{E}(f,f)\cdot \mathcal{E}(\chi,\chi),
$$
with equality obtained when $f(x,y) = \chi(x,y)$. The variational characterization of Polya \eqref{eq:torsionalvariation} is generalized in  following proposition  for precompact domains of Riemannian Manifolds.

\begin{proposition}\label{prop:upper_bounds_torsion}Let $(M,g)$ be a Riemannian manifold.  Let $\Omega\subset M$ be an open precompact domain.    Then,
    
    \begin{equation*}
        \mathcal{T}(\Omega) = \sup\left\lbrace  \frac{\left(\int_\Omega  u\, {d}\V\right)^2}{\int_\Omega \Vert \nabla u\Vert^2\, { d}\V} \, :\,  u\in C^{\infty}_0(\Omega)\, {\rm and}\, 
         \int_\Omega \Vert \nabla u\Vert\, { d}\V\neq 0
    \right\rbrace.
    \end{equation*}
\end{proposition}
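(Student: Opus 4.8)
The plan is to recognize the right-hand side as the value produced by a Cauchy--Schwarz inequality attached to the Dirichlet form $\mathcal{E}(u,v)=\int_\Omega g(\nabla u,\nabla v)\,d\V$, with the extremum carried by the torsion function $E$ itself. First I would record the basic identity underlying everything: if $E$ solves the Poisson problem \eqref{eq:unua}, then for any $u\in C^\infty_0(\Omega)$ Green's formula together with $\Delta_g E=-1$ gives
$$
\int_\Omega u\,d\V=-\int_\Omega u\,\Delta_g E\,d\V=\int_\Omega g(\nabla u,\nabla E)\,d\V=\mathcal{E}(u,E),
$$
the boundary term vanishing because $u$ is compactly supported in $\Omega$. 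Since $\Omega$ is precompact, standard elliptic theory (equivalently, the weak/Lax--Milgram formulation, using the Poincaré inequality on $\Omega$) places $E$ in $H^1_0(\Omega)$, so the identity persists for all $u\in H^1_0(\Omega)$ by density; taking $u=E$ yields $\mathcal{T}(\Omega)=\int_\Omega E\,d\V=\mathcal{E}(E,E)=\int_\Omega\Vert\nabla E\Vert^2\,d\V$. Moreover $\mathcal{T}(\Omega)>0$: since $-\Delta_g E=1>0$ and $\partial\Omega\neq\emptyset$, the maximum principle gives $E>0$ in $\Omega$, so this common value is in particular nonzero.

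For the upper bound I would apply the Cauchy--Schwarz inequality to the nonnegative bilinear form $\mathcal{E}$: for every $u\in C^\infty_0(\Omega)$,
$$
\left(\int_\Omega u\,d\V\right)^2=\bigl(\mathcal{E}(u,E)\bigr)^2\leq\mathcal{E}(u,u)\,\mathcal{E}(E,E)=\left(\int_\Omega\Vert\nabla u\Vert^2\,d\V\right)\mathcal{T}(\Omega).
$$
For a smooth, compactly supported $u$ one has $\int_\Omega\Vert\nabla u\Vert\,d\V\neq0$ exactly when $u\not\equiv0$, which is the same as $\int_\Omega\Vert\nabla u\Vert^2\,d\V\neq0$; so on the admissible set we may divide and conclude that the quotient is $\leq\mathcal{T}(\Omega)$. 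Hence the supremum is at most $\mathcal{T}(\Omega)$.

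For the reverse inequality I would show the supremum is approached along a sequence converging to $E$. By definition of $H^1_0(\Omega)$ there is a sequence $u_k\in C^\infty_0(\Omega)$ with $u_k\to E$ in $H^1_0(\Omega)$; then $\int_\Omega u_k\,d\V\to\int_\Omega E\,d\V=\mathcal{T}(\Omega)$ and $\int_\Omega\Vert\nabla u_k\Vert^2\,d\V\to\mathcal{E}(E,E)=\mathcal{T}(\Omega)>0$, so (for $k$ large the denominators are nonzero and) the quotient evaluated at $u_k$ tends to $\mathcal{T}(\Omega)^2/\mathcal{T}(\Omega)=\mathcal{T}(\Omega)$. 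Thus the supremum is at least $\mathcal{T}(\Omega)$, and combining with the previous paragraph gives equality. The only genuinely delicate points are purely functional-analytic, namely that the torsion function belongs to $H^1_0(\Omega)$ — which legitimizes both the integration by parts and the density approximation — and that $\mathcal{T}(\Omega)\neq0$ so the limiting quotient is well defined; both are supplied by elliptic theory and the maximum principle on the precompact domain $\Omega$.
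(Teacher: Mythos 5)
Your proof is correct and rests on exactly the same mechanism as the paper's: the identity $\int_\Omega u\,d\V=\int_\Omega g(\nabla u,\nabla E)\,d\V$ obtained from $\Delta_g E=-1$ by integration by parts, Cauchy--Schwarz for the Dirichlet form, and the identification $\mathcal{T}(\Omega)=\int_\Omega\Vert\nabla E\Vert^2\,d\V$. The one place where you genuinely diverge is the attainment of the supremum: the paper simply evaluates the quotient at $u=E$, which is slightly loose since $E$ is not compactly supported in $\Omega$ and so, read literally, does not belong to the admissible class $C^\infty_0(\Omega)$; you instead observe that $E\in H^1_0(\Omega)$ and approximate it by $u_k\in C^\infty_0(\Omega)$ in the $H^1$ norm, using $\mathcal{T}(\Omega)>0$ (maximum principle) to keep the denominators away from zero, so the supremum is realized as a limit. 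This costs a small amount of elliptic/functional-analytic machinery but makes the equality genuinely hold over the stated class (and works without invoking boundary regularity), whereas the paper's shortcut is harmless only if one tacitly enlarges the class to functions vanishing on $\partial\Omega$.
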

\begin{proof}
Let $E$ be the mean exit time function in $\Omega$, \emph{i.e} the solution of the Poisson problem \eqref{eq:unua},  and let $u\in \{ f\in C^{\infty}_0(\Omega) : \int_\Omega \Vert \nabla f\Vert\, {d}\V\neq 0\}$. Then,
    \begin{equation}\label{eq:3nov}
    \begin{aligned}
        \int _\Omega u\, { d}\V=&-\int_\Omega u\Delta E\, { d}\V=-\int_\Omega{\rm div}_g\left(u\nabla E\right)\, {d}\V+\int_\Omega\langle\nabla u,\nabla E\rangle\, { d}\V\\
        =&\int_\Omega\langle\nabla u,\nabla E\rangle\, { d}\V\leq \int_\Omega\Vert\nabla u\Vert \, \Vert \nabla E\Vert\,  {d}\V. 
    \end{aligned}
    \end{equation}
    Hence, by applying the Cauchy-Schwarz inequality 
    $$
\int_\Omega u\, {d}\V\leq \left(\int_\Omega\Vert\nabla u\Vert^2 \,  {d}\V\right)^{\frac{1}{2}}\left(\int_\Omega \Vert \nabla E\Vert^2\,  {d}\V\right)^{\frac{1}{2}}.
    $$
    Since
    $$
\int_\Omega \Vert \nabla E\Vert^2\, d{\rm V} =\int_\Omega {\rm div}_g(E\nabla E)\, d{\rm V}-\int_\Omega E\Delta E\, d{\rm V}=\mathcal{T}(\Omega),
    $$
    and we are assuming that $\int_\Omega\Vert\nabla u\Vert^2 \,  { d}\V\neq 0$, we obtain that 
    $$
\frac{\left(\int_\Omega  u\, {d}\V\right)^2}{\int_\Omega \Vert \nabla u\Vert^2\, {d}\V}\leq \mathcal{T}(\Omega)=\frac{\left(\int_\Omega  E\, {d}\V\right)^2}{\int_\Omega \Vert \nabla E\Vert^2\, { d}\V},
    $$
    and the proposition is proved.
 \end{proof}
 
The characterization of the torsional rigidity as an infimum can be stated in the following proposition

\begin{proposition}\label{prop:lower_bounds_torsion}
    Let $(M,g)$ be a Riemannian manifold, let $\Omega\subset M$ an open subset of $M$ with compact closure and smooth boundary $\partial\Omega\neq \emptyset$.    Then,
    $$
\mathcal{T}(\Omega)=\inf \left\{\int_{\Omega}\Vert X\Vert^2\, d{\rm V}:\,X\in \mathfrak{X}(\Omega)\, {\rm with}\, {\rm div}_g X=-1 \right\}.
    $$
\end{proposition}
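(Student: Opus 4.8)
The plan is to show that the infimum is attained exactly at the gradient vector field $X = -\nabla E$, where $E$ is the solution of the Poisson problem \eqref{eq:unua}, and that any other admissible vector field has at least this energy. First I would check admissibility: since $\Delta E = \operatorname{div}_g(\nabla E) = -1$ in $\Omega$, the field $X_0 := -\nabla E$ satisfies $\operatorname{div}_g X_0 = -1$, so it lies in the competitor set. Its energy is $\int_\Omega \Vert \nabla E\Vert^2\, d\mathrm{V}$, which, by the same integration-by-parts identity already used in the proof of Proposition \ref{prop:upper_bounds_torsion} (namely $\int_\Omega \Vert\nabla E\Vert^2\, d\mathrm{V} = \int_\Omega \operatorname{div}_g(E\nabla E)\, d\mathrm{V} - \int_\Omega E\Delta E\, d\mathrm{V}$, with the divergence term vanishing because $E|_{\partial\Omega} = 0$), equals $\mathcal{T}(\Omega)$. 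So the infimum is at most $\mathcal{T}(\Omega)$.

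Next I would prove the reverse inequality: for any $X \in \mathfrak{X}(\Omega)$ with $\operatorname{div}_g X = -1$, we have $\int_\Omega \Vert X\Vert^2\, d\mathrm{V} \geq \mathcal{T}(\Omega)$. The natural device is to write $X = -\nabla E + Y$ with $Y := X + \nabla E$, which is divergence-free: $\operatorname{div}_g Y = -1 - \Delta E = -1 + 1 = 0$. Expanding,
$$
\int_\Omega \Vert X\Vert^2\, d\mathrm{V} = \int_\Omega \Vert \nabla E\Vert^2\, d\mathrm{V} - 2\int_\Omega \langle \nabla E, Y\rangle\, d\mathrm{V} + \int_\Omega \Vert Y\Vert^2\, d\mathrm{V}.
$$
The cross term vanishes: $\int_\Omega \langle \nabla E, Y\rangle\, d\mathrm{V} = \int_\Omega \operatorname{div}_g(E Y)\, d\mathrm{V} - \int_\Omega E\, \operatorname{div}_g Y\, d\mathrm{V}$, where the first integral is $\int_{\partial\Omega} E\, \langle Y, \nu\rangle\, d\mathrm{A} = 0$ since $E$ vanishes on $\partial\Omega$, and the second is zero because $Y$ is divergence-free. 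Hence $\int_\Omega \Vert X\Vert^2\, d\mathrm{V} = \mathcal{T}(\Omega) + \int_\Omega \Vert Y\Vert^2\, d\mathrm{V} \geq \mathcal{T}(\Omega)$, with equality precisely when $Y \equiv 0$, i.e. $X = -\nabla E$. Combining the two inequalities gives the claimed identity.

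The main obstacle is not algebraic but analytic: one must be careful that the competitor vector fields are regular enough (and $\Omega$, $\partial\Omega$ smooth enough) for the divergence theorem to apply, in particular that the boundary integral $\int_{\partial\Omega} E\langle Y,\nu\rangle\, d\mathrm{A}$ genuinely makes sense and vanishes; the smoothness hypothesis on $\partial\Omega$ and the standard elliptic regularity of $E$ (smooth up to the boundary, since the domain is precompact with smooth boundary) take care of this, and for merely-$C^1$ or $L^2$ competitors one invokes a density/approximation argument. I would also note explicitly that the infimum is attained, so it could be written as a minimum; this follows from the equality case above. No deeper difficulty is expected — the proposition is essentially the statement that orthogonal projection onto the space of gradients of $H^1_0$ functions minimizes the $L^2$ norm within an affine subspace cut out by the divergence constraint.
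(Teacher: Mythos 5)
Your overall strategy --- decompose an arbitrary admissible field into the canonical gradient field plus a divergence-free remainder and show the cross term vanishes --- is sound and is genuinely different from the paper's argument, but as written it contains a sign error that breaks two key steps. Since $\Delta E={\rm div}_g(\nabla E)=-1$, the admissible competitor is $X_0=\nabla E$, not $-\nabla E$: the field $-\nabla E$ has ${\rm div}_g(-\nabla E)=+1$ and does not lie in the constraint set. Consequently, with your definition $Y:=X+\nabla E$ one gets ${\rm div}_g Y={\rm div}_g X+\Delta E=-2$, not $0$, so the cross term does not vanish (in fact $\int_\Omega \langle\nabla E,Y\rangle\,d{\rm V}=-\int_\Omega E\,{\rm div}_g Y\,d{\rm V}=2\mathcal{T}(\Omega)$), and the expansion no longer yields the desired inequality. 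The fix is immediate: take $X_0=\nabla E$ and write $X=\nabla E+Y$ with $Y:=X-\nabla E$, which is divergence-free; then the integration by parts you describe (using $E|_{\partial\Omega}=0$) gives $\int_\Omega\Vert X\Vert^2\,d{\rm V}=\mathcal{T}(\Omega)+\int_\Omega\Vert Y\Vert^2\,d{\rm V}\geq\mathcal{T}(\Omega)$, with equality iff $X=\nabla E$.

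With that correction your proof is valid and slightly stronger than the paper's: it exhibits the infimum as a minimum, attained exactly at $\nabla E$. The paper proceeds differently, and more economically: it writes $\mathcal{T}(\Omega)=\int_\Omega E\,d{\rm V}=-\int_\Omega E\,{\rm div}_g X\,d{\rm V}=\int_\Omega g(\nabla E,X)\,d{\rm V}$ (the boundary term vanishing since $E=0$ on $\partial\Omega$), then applies Cauchy--Schwarz together with $\int_\Omega\Vert\nabla E\Vert^2\,d{\rm V}=\mathcal{T}(\Omega)$ to conclude $\int_\Omega\Vert X\Vert^2\,d{\rm V}\geq\mathcal{T}(\Omega)$; admissibility of $\nabla E$ then gives the equality of the infimum with $\mathcal{T}(\Omega)$. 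Your orthogonal-decomposition route buys the explicit equality case and the Pythagorean structure behind the inequality, at the cost of the extra (and, in your write-up, mishandled) bookkeeping of signs; your remarks on regularity and density for merely $C^1$ or $L^2$ competitors are sensible but not needed at the level of generality of the statement, which restricts to smooth fields.
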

\begin{proof}
    Let $E$ be the solution to problem \eqref{eq:unua} and let $X\in \mathfrak{X}(\Omega)$ be smooth vector field such that ${\rm div}_g X=-1$. Applying the divergence Theorem and the Cauchy-Schwartz inequality 
    $$
    \begin{aligned}
        \mathcal{T}(\Omega)=&\int_\Omega E\, d{\rm V}=\int_\Omega -E\, {\rm div}_gX\, d{\rm V}=\int_\Omega{\rm div}_g(-E\, X)\, d{\rm V}+\int_\Omega g(\nabla E, X)\, d{\rm V}\\
        =&-\int_{\partial \Omega} E g(X, \nu)\, d{\rm A}+\int_\Omega g(\nabla E, X)\, d{\rm V} \\
        \leq &\left(\int_\Omega \Vert \nabla E\Vert^2\, d{\rm V}\right)^{\frac{1}{2}}\cdot \left(\int_\Omega \Vert X\Vert^2\, d{\rm V}\right)^{\frac{1}{2}}.
    \end{aligned}
    $$
    Since
    $$
\int_\Omega \Vert \nabla E\Vert^2\, d{\rm V} =\int_\Omega {\rm div}_g(E\nabla E)\, d{\rm V}-\int_\Omega E\Delta E\, d{\rm V}=\mathcal{T}(\Omega),
    $$
    we obtain
    $$
\int_\Omega \Vert X\Vert^2\, d{\rm V}\geq \mathcal{T}(\Omega)=\int_\Omega \Vert \nabla E\Vert^2\, d{\rm V}
    $$
    and the proposition is proved. 
\end{proof}

 \section{Geometric flows and bounds for the torsional rigidity }\label{sec:GF}

 Our objective is to derive the evolution of the torsional rigidity of a domain as the metric tensor of the manifold evolves under a geometric flow. Geometric flows are a fundamental tool in differential geometry, facilitating the analysis of geometric structures through their continuous deformation.
\subsection{Geometric flows}
 These flows can be conceptualized as trajectories, $g(t)$,  within the space of metrics, $\mathcal{M}(M^n)$ defined on a given manifold. The evolution of the metric is typically governed by the partial differential equation    $ \frac{\partial g_t}{\partial t}= F(t)\in T_{g_t}\mathcal{M}(M)$. 

 Since the Riemannian metric determines the geometric properties of the manifold, such as distance, volume, and curvature, any change in the metric under a geometric flow leads to the evolution of these properties. The strength of geometric flows lies in their property to determine the evolution of such quantities.

Given that we will use the two variational characterizations of torsional rigidity described in Proposition \ref{prop:upper_bounds_torsion} and Proposition \ref{prop:lower_bounds_torsion}, it is essential to understand the following time-dependent quantities:
$$
\begin{aligned}
t &\mapsto d\V_{t},\\
t &\mapsto \Vert \nabla^{g_t} u\Vert, \quad u \in C_o^\infty(\Omega),\\
t &\mapsto \Vert X\Vert = \sqrt{g_t(X,X)}, \quad X \in \mathfrak{X}(\Omega),\\
t &\mapsto {\rm div}_{g_t} X, \quad X \in \mathfrak{X}(\Omega).
\end{aligned}
$$
The following proposition addresses these evolutions:

\begin{proposition}\label{prop:general}
    Let $\{g_t\}_{t\in [0,T_{\rm max})}$ be a solution to the geometric flow $\frac{\partial g_t}{\partial t}=f$ on the $n$-dimensional manifold $M$.  Then for any $t\in [0,T_{\rm max})$:
    \begin{enumerate}
        \item $ \frac{\partial (d\V_{t})}{ \partial t }=
        \frac{1}{2}\tr_g(f)d\V_{t}$,
        \item $
\frac{\partial}{\partial t}\Vert \nabla^{g_t}u\Vert^2_{g_t}=-f(\nabla^{g_t}u,\nabla^{g_t} u)$, 
\item $
\frac{\partial}{\partial t}g_t(X,X)=f(X,X)$,  
\item and $  \frac{\partial}{\partial t} {\rm div}_{g_t}X=\frac{1}{2}X(\tr_g(f))$.
\end{enumerate}
\end{proposition}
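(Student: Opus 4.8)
The plan is to compute each of the four derivatives pointwise by expressing the relevant object in local coordinates and differentiating the coordinate formula in $t$, using only that $\frac{\partial g_{ij}}{\partial t}=f_{ij}$. The inverse metric will be the key auxiliary ingredient: from $g^{ik}g_{kj}=\delta^i_j$ one gets, by differentiating, the standard identity $\frac{\partial g^{ij}}{\partial t}=-g^{ik}g^{jl}f_{kl}$, which I will record first since it is needed for items (1), (2) and (4).

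For item (1), I would use the local expression $d\V_t=\sqrt{\det g_t}\,dx^1\wedge\cdots\wedge dx^n$ together with Jacobi's formula $\frac{\partial}{\partial t}\det g_t=\det g_t\cdot g^{ij}\frac{\partial g_{ij}}{\partial t}=\det g_t\cdot g^{ij}f_{ij}=\det g_t\cdot\tr_g(f)$; dividing by $2\sqrt{\det g_t}$ gives $\frac{\partial(d\V_t)}{\partial t}=\tfrac12\tr_g(f)\,d\V_t$. For item (3), there is nothing to do beyond observing that $g_t(X,X)=g_{ij}X^iX^j$ with $X^i$ independent of $t$, so $\frac{\partial}{\partial t}g_t(X,X)=f_{ij}X^iX^j=f(X,X)$. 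For item (2), I write $\Vert\nabla^{g_t}u\Vert^2_{g_t}=g^{ij}\partial_iu\,\partial_ju$ (the partial derivatives of $u$ being $t$-independent) and differentiate, so that $\frac{\partial}{\partial t}\Vert\nabla^{g_t}u\Vert^2=\frac{\partial g^{ij}}{\partial t}\partial_iu\,\partial_ju=-g^{ik}g^{jl}f_{kl}\,\partial_iu\,\partial_ju=-f(\nabla^{g_t}u,\nabla^{g_t}u)$, the last equality because $(\nabla^{g_t}u)^k=g^{ki}\partial_iu$.

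For item (4), the most efficient route is to use the coordinate-free formula $\operatorname{div}_{g_t}X=\frac{1}{\sqrt{\det g_t}}\partial_i\!\left(\sqrt{\det g_t}\,X^i\right)$. Since $X^i$ does not depend on $t$, differentiating in $t$ and using item (1)'s computation $\frac{\partial}{\partial t}\sqrt{\det g_t}=\tfrac12\sqrt{\det g_t}\,\tr_g(f)$ gives, after expanding the derivative and cancelling, $\frac{\partial}{\partial t}\operatorname{div}_{g_t}X=\tfrac12\partial_i\!\big(\tr_g(f)\big)X^i+\tfrac12\tr_g(f)\,\frac{1}{\sqrt{\det g_t}}\partial_i(\sqrt{\det g_t}X^i)-\tfrac12\tr_g(f)\operatorname{div}_{g_t}X=\tfrac12 X(\tr_g(f))$, the two $\tr_g(f)\operatorname{div}_{g_t}X$ terms cancelling. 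Alternatively one can argue invariantly: $\operatorname{div}_{g_t}X$ is characterized by $(\operatorname{div}_{g_t}X)\,d\V_t=\mathcal{L}_X(d\V_t)$ (Cartan/Lie-derivative of the volume form), differentiate both sides in $t$, commute $\partial_t$ with $\mathcal{L}_X$ (they act on different arguments), use item (1), and simplify.

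The computations here are all routine; the only place that requires a moment's care is item (4), where one must keep track of the product rule applied to $\sqrt{\det g_t}\,X^i$ and verify that the extra $\operatorname{div}_{g_t}X$ terms genuinely cancel rather than accumulate — so that is the step I would write out most carefully. Everything else follows immediately from the derivative of the inverse metric and Jacobi's formula.
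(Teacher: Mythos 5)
Your proposal is correct and follows essentially the same route as the paper: Jacobi's formula for the volume form, the derivative of the inverse metric $\frac{\partial g^{ij}}{\partial t}=-g^{ik}g^{jl}f_{kl}$ for the gradient norm, the direct computation for $g_t(X,X)$, and the coordinate formula $\operatorname{div}_{g_t}X=\frac{1}{\sqrt{\det g_t}}\partial_j(X^j\sqrt{\det g_t})$ with the same cancellation of the $\tr_g(f)\operatorname{div}_{g_t}X$ terms. The alternative Lie-derivative argument you sketch for item (4) is a nice invariant variant, but otherwise the two proofs coincide.
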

\begin{proof}We have use the Einstein's notation for repeated indices.
  The evolution of the volume form $d\V_{t}$ can  be computed using local coordinates as  follows:
\begin{equation*}
    \begin{aligned}
        \frac{\partial (d\V_{t})}{ \partial t }=\frac{\partial \sqrt{det(g_t)}}{ \partial t }=& \frac{1}{2\sqrt{det(g_t)}} \frac{\partial det(g_t)}{\partial t}
        =  \frac{1}{2\sqrt{det(g_t)}} \frac{\partial det(g_t)}{\partial g_{ij}}\frac{\partial g_{ij}}{\partial t}\\
        =& \frac{1}{2}\sqrt{det(g_t)} g^{ij} f_{ij}
       = \frac{1}{2}\sqrt{det(g_t)} \tr_g(f).
    \end{aligned}
\end{equation*}
  For the evolution of   $\Vert \nabla^{g_t} u\Vert$ we will have that
  $$
  \begin{aligned}
\frac{\partial}{\partial t}\Vert \nabla^{g_t}u\Vert^2_{g_t}=&\frac{\partial}{\partial t}g\left(\nabla^{g_t}u,\nabla^{g_t}u\right)=\frac{\partial}{\partial t}g\left(g^{im}\frac{\partial u}{\partial x^m}\frac{\partial}{\partial x^i},g^{jl}\frac{\partial u}{\partial x^j}\frac{\partial}{\partial x^l}\right)\\
=&\frac{\partial}{\partial t}\left(g^{im}\frac{\partial u}{\partial x^m}g^{jl}\frac{\partial u}{\partial x^j}g_{il}\right)=\frac{\partial}{\partial t}\left(\delta^{m}_l\frac{\partial u}{\partial x^m}g^{jl}\frac{\partial u}{\partial x^j}\right)\\
=&\frac{\partial u}{\partial x^l}\frac{\partial u}{\partial x^j}\frac{\partial}{\partial t}\left(g^{jl}\right).
  \end{aligned}
$$  
But taking into account that
$
\frac{\partial}{\partial t}\left(g^{rs}g_{sl}\right)=0$
we can easily conclude that 
$\frac{\partial}{\partial t}\left(g^{jl}\right)=-g^{js}f_{sm}g^{ml}$,
and thence
\begin{equation*}
      \begin{aligned}
\frac{\partial}{\partial t}\Vert \nabla^{g_t}u\Vert^2_{g_t}=&-\frac{\partial u}{\partial x^l}\frac{\partial u}{\partial x^j}g^{jm}f_{ms}g^{ls}=- \left( \nabla^{g_t}u\right)^sf_{ms}\left( \nabla^{g_t}u\right)^m\\
=&-f(\nabla^{g_t}u,\nabla^{g_t} u).
  \end{aligned}
\end{equation*}
For the evolution of $\Vert X\Vert^2$ we will just obtain  
$$
\frac{\partial}{\partial t}g_t(X,X)=\frac{\partial}{\partial t}g_{ij}X^iX^j=f_{ij}X^iX^j=f(X,X).
$$
And finally we can compute the evolution of the divergence of a vector field by using the evolution of the volume form as
$$
\begin{aligned}
    \frac{\partial}{\partial t} {\rm div}_{g_t}X=& \frac{\partial}{\partial t} \bigg( \frac{1}{\sqrt{det(g_t)}}\frac{\partial}{\partial x^j}\bigg( X^j\sqrt{det(g_t)} \bigg)\bigg)\\
    =&-\frac{1}{2 \sqrt{det(g_t)}} tr_g(f)\frac{\partial}{\partial x^j}\bigg( X^j\sqrt{det(g_t)} \bigg)\\&+\frac{1}{\sqrt{det(g_t)}}\frac{\partial}{\partial x^j}\bigg(\frac{1}{2} X^j\sqrt{det(g_t)} \tr_g(f)\bigg)
    =\frac{1}{2}X^j\frac{\partial}{\partial x^j}(\tr_g(f))\\
    =&\frac{1}{2}X(\tr_g(f)).
\end{aligned}
$$    
\end{proof}
\subsection{Bounds for the Torsional Rigidity under a Geometric Flow}
With the variational characterization of the torsional rigidity given by propositions \ref{prop:lower_bounds_torsion} and \ref{prop:upper_bounds_torsion}, and with the evolution of the relevant geometric tensors under a geometric flow given by proposition \ref{prop:general} we can provide lower and upper bounds for the torsional rigidity of a domain in terms of its initial torsonial rigidity.
In the following theorem lower bounds are stated.
\begin{theorem}\label{teo:lower}
    Let $\{g_t\}_{t\in [0,T_{\rm max})}$ be a solution to the geometric flow $\frac{\partial g_t}{\partial t}=f$ on the $n$-dimensional manifold $M$. Let $\Omega \subset M$ be a precompact domain with smooth boundary. Suppose that  for any $t\in [0,T_{\rm max})$:
    \begin{enumerate}
        \item There exists a smooth function function $L:\mathbb{R}\to \mathbb{R}$ such that for any point $p\in M$ tangent vector $v\in T_pM$
        $$
f(v,v)\geq L(t) g_t(v,v).
        $$
        \item There exist two smooth functions $l,u:\mathbb{R}\to\mathbb{R}$ such that for any point $p\in M$
        $$
 l(t)\leq \tr_g(f)\leq u(t).
        $$
    \end{enumerate}
    
    Then, for any $t\in [0, T_{\rm max})$, the torsional rigidity $\mathcal{T}(\Omega_t)$ of $\Omega$ with respect to $g_t$ is bounded from below by
  
$$
\begin{aligned}
    \mathcal{T}(\Omega_t)\geq&
    e^{\int_0^t\left(l(s)-\frac{1}{2}u(s)+L(s)\right)ds}\mathcal{T}(\Omega_0).
\end{aligned}
$$

\end{theorem}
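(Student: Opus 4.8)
The plan is to use the infimum characterization of Proposition~\ref{prop:lower_bounds_torsion}, since a lower bound on $\mathcal{T}(\Omega_t)$ is most naturally obtained by producing, for every admissible vector field at time $t$, a corresponding admissible vector field at time $0$ whose energy controls it. First I would fix $t\in[0,T_{\rm max})$ and let $X_t\in\mathfrak{X}(\Omega)$ be any vector field with ${\rm div}_{g_t}X_t=-1$; in fact it suffices to take $X_t=\nabla^{g_t}E_t$, the minimizer, so that $\int_\Omega\Vert X_t\Vert_{g_t}^2\,d\V_t=\mathcal{T}(\Omega_t)$. The difficulty is that $X_t$ need not be divergence-free (equal to $-1$) with respect to $g_0$, so it is not directly a competitor in the time-$0$ variational problem. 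The standard fix is to rescale: set $Y=c(t)\,X_t$ for a suitable constant $c(t)$ and compare, but since ${\rm div}_{g_0}(c X_t)$ is not constant in general, a cleaner route is to run the estimate differentially in $t$ rather than comparing the two endpoints directly.

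So the core of the argument I would present is a differential inequality for $\varphi(t):=\mathcal{T}(\Omega_t)$. Using the minimizing vector field $X_t=\nabla^{g_t}E_t$ at each time and Proposition~\ref{prop:general}, I would compute (or rather estimate) $\frac{d}{dt}\varphi(t)$. Here one must be careful: $\varphi(t)=\inf_X\int_\Omega\Vert X\Vert_{g_t}^2\,d\V_t$ over the $t$-dependent constraint set $\{{\rm div}_{g_t}X=-1\}$, so differentiating an infimum with a moving constraint requires an envelope-type argument. The trick is to bound $\varphi(t+h)$ from below using a fixed but nearly optimal competitor. Concretely, given the optimal $X_{t}$ at time $t$, one modifies it to $\tilde X$ satisfying ${\rm div}_{g_{t+h}}\tilde X=-1$ (correcting the constraint, which by part (4) of Proposition~\ref{prop:general} drifts at rate $\tfrac12 X(\tr_g f)$, controllable since $\tr_g f$ has bounds $l,u$ but — and this is a subtlety — part (4) involves the gradient of $\tr_g f$, which is zero exactly when $\tr_g f$ is spatially constant).

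The hard part, and the place where hypotheses (1) and (2) must be used precisely, is controlling the three competing effects: the volume element grows/shrinks at rate $\tfrac12\tr_g f\in[\tfrac12 l,\tfrac12 u]$ (part (1) of Prop.~\ref{prop:general}), the integrand $\Vert X\Vert_{g_t}^2$ evolves with $\frac{\partial}{\partial t}g_t(X,X)=f(X,X)\ge L(t)\,g_t(X,X)$ (part (3) together with hypothesis (1)), and the constraint ${\rm div}_{g_t}X=-1$ must be restored. Writing $X_t=\nabla^{g_t}E_t$ and noting $\Vert\nabla^{g_t}E_t\Vert^2$ evolves with the opposite sign, $\frac{\partial}{\partial t}\Vert\nabla^{g_t}u\Vert^2=-f(\nabla u,\nabla u)$ (part (2)), there is a sign bookkeeping step I expect to be the main obstacle: reconciling whether the relevant object is $\int\Vert X\Vert^2 d\V$ with $X$ held fixed in components (constraint preserved only to first order) or genuinely the infimum. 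I would resolve this by taking $X$ fixed in coordinates on a short interval, accepting that ${\rm div}_{g_s}X = -1 + O(s-t)$, and then projecting back onto the constraint set with a correction whose energy cost is higher order, so that after dividing by $h$ and letting $h\to0$ only the three first-order terms survive.

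Assembling these, I would obtain
$$
\frac{d}{dt}\log\mathcal{T}(\Omega_t)\ \ge\ L(t)\ +\ l(t)\ -\ \tfrac12 u(t),
$$
where $L(t)$ comes from the pointwise lower bound on how the squared norm of the competitor field grows under $f$ (hypothesis (1), via part (3) of Prop.~\ref{prop:general}), the $+l(t)$ term comes from the volume-element contribution and the divergence-constraint correction together contributing at least $l(t)$ (the constraint is homogeneous of the right degree: rescaling $X$ by a factor $1+\tfrac12\int\!l$ restores ${\rm div}=-1$ on average and costs $l(t)$ at the level of $\log\mathcal{T}$), and $-\tfrac12 u(t)$ accounts for the worst-case volume growth inside $\int_\Omega\Vert X\Vert^2\,d\V_t$. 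Integrating this differential inequality from $0$ to $t$ yields exactly
$$
\mathcal{T}(\Omega_t)\ \ge\ e^{\int_0^t\left(l(s)-\frac12 u(s)+L(s)\right)ds}\,\mathcal{T}(\Omega_0),
$$
which is the claim. The only genuinely delicate point is justifying the envelope differentiation of the infimum and checking the exact coefficient ($l$ versus $u$, and the factor $\tfrac12$) on the volume and constraint-correction terms; everything else is a direct substitution of Proposition~\ref{prop:general}.
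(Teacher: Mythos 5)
Your proposal has a genuine gap, and it originates in the choice of variational characterization. For a \emph{lower} bound on $\mathcal{T}(\Omega_t)$ the natural tool is the supremum characterization of Proposition \ref{prop:upper_bounds_torsion}: any fixed test function yields a lower bound, and its admissibility ($u\in C^\infty_0(\Omega)$) does not depend on the metric, so nothing drifts along the flow. The paper does exactly this: it fixes $E_0$, the mean exit time at $t=0$, sets $G(t)=\int_\Omega E_0\,d\V_t$ and $F(t)=\int_\Omega\Vert\nabla^{g_t}E_0\Vert^2\,d\V_t$, and from Proposition \ref{prop:general} obtains $G'(t)\geq\tfrac12 l(t)\,G(t)$ and $F'(t)\leq\left(-L(t)+\tfrac12 u(t)\right)F(t)$, whence $\mathcal{T}(\Omega_t)\geq G^2(t)/F(t)\geq e^{\int_0^t\left(l(s)-\frac12 u(s)+L(s)\right)ds}\,\mathcal{T}(\Omega_0)$. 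You instead work with the infimum characterization of Proposition \ref{prop:lower_bounds_torsion}, where a lower bound requires controlling \emph{every} competitor, and you try to do this by an envelope argument over the moving constraint set $\{{\rm div}_{g_t}X=-1\}$.

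The step that fails is the claim that restoring the constraint costs only higher-order energy. By Proposition \ref{prop:general}(4) the divergence of a fixed field drifts at rate $\tfrac12 X(\tr_g(f))$, so after time $h$ the defect is of order $h$; a correcting field $Y_h$ of size $O(h)$ changes the energy by the cross term $2\int_\Omega g_t(X,Y_h)\,d\V_t+O(h^2)$, which is genuinely first order and carries no sign you can control under the mere bounds $l(t)\leq\tr_g(f)\leq u(t)$. This is precisely why the paper's companion upper bound, Theorem \ref{teo:upper}, assumes $\tr_g(f)=u(t)$ exactly: spatial constancy kills the drift so a fixed vector field stays admissible, while the lower bound avoids vector fields altogether. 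Moreover, even granting the envelope step, your coefficient bookkeeping does not follow from the propositions you cite: differentiating the energy of the optimizer gives $\int_\Omega f(X,X)\,d\V_t+\tfrac12\int_\Omega\tr_g(f)\Vert X\Vert^2\,d\V_t\geq\left(L(t)+\tfrac12 l(t)\right)\mathcal{T}(\Omega_t)$, so your scheme would naturally produce the exponent $\int_0^t\left(L+\tfrac12 l\right)$; the assignment of $+l$ to a constraint correction and $-\tfrac12 u$ to the volume term is not derived and looks fitted to the target. In the paper the exponent $l-\tfrac12 u+L$ arises transparently from the ratio $G^2/F$: the square of $G$ contributes $l$ and the reciprocal of $F$ contributes $L-\tfrac12 u$.
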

\begin{proof}
  Let $E_0$ be the solution for the problem \eqref{eq:unua} for the metric $g_0$ in $\Omega$, namely, let $E_0$ be the initial mean exit time function of $\Omega$ at time $t=0$. Let the following function be defined as,
$$
F(t):=\int_\Omega\Vert \nabla^{g_t} E_0\Vert^2\, d\V_t ,
$$
Then, by using proposition \ref{prop:general},
$$
\begin{aligned}
F'(t)=&\int_\Omega -f(\nabla^{g_t} E_0, \nabla^{g_t} E_0)\, d\V_t+ \frac{1}{2}\int_\Omega\Vert \nabla^{g_t} E_0\Vert^2 \tr_g(f)\, d\V_t\\
\leq &\left(-L(t)+\frac{1}{2} u(t)\right) F(t),
\end{aligned}
$$
which implies 
$$
F(t)\leq e^{\int_0^t\left(-L(s)+\frac{1}{2} u(s)\right)ds}F(0).
$$
Similarly, for the function
$
G(t):=\int_\Omega E_0\, d\V_t
$,
$$
G'(t)=\frac{1}{2}\int_\Omega E_0 \, \tr_g(f)\, d\V_t\geq\frac{1}{2}l(t) G(t),
$$
and thence for any $t\in [0,T_{\rm max})$
$$
G(t)\geq e^{\frac{1}{2}\int_0^tl(s)ds}G(0).
$$
Finally by proposition \ref{prop:upper_bounds_torsion},
$$
\begin{aligned}
    \mathcal{T}(\Omega_t)\geq&\frac{\left(\int_\Omega E^0\, d\V_t\right)^2}{\int_\Omega \Vert \nabla^{g_t}E^0\Vert^2\, d\V_t}= \frac{G^2(t)}{F(t)}\geq e^{\int_0^t\left(l(s)-\frac{1}{2}u(s)+L(s)\right)ds} \frac{G^2(0)}{F(0)}\\
    =&e^{\int_0^t\left(l(s)-\frac{1}{2}u(s)+L(s)\right)ds} \frac{\left(\int_\Omega E^0\, d\V_0\right)^2}{\int_\Omega \Vert \nabla^{g_t}E^0\Vert^2\, d\V_0}\\
    =&e^{\int_0^t\left(l(s)-\frac{1}{2}u(s)+L(s)\right)ds}\mathcal{T}(\Omega_0),
\end{aligned}
$$
and the theorem is proved.
\end{proof}
For upper bounds of the torsional rigidity of a domain in terms of the initial torsional rigidity we can state the following theorem
\begin{theorem}\label{teo:upper}
    Let $\{g_t\}_{t\in [0,T_{\rm max})}$ be a solution to the geometric flow $\frac{\partial g_t}{\partial t}=f$ on the $n$-dimensional manifold $M$. Let $\Omega \subset M$ be a precompact domain with smooth boundary. Suppose that  for any $t\in [0,T_{\rm max})$:
    \begin{enumerate}
        \item There exists a smooth function function $U:\mathbb{R}\to \mathbb{R}$ such that for any point $p\in M$ tangent vector $v\in T_pM$
        $$
f(v,v)\leq U(t) g_t(v,v).
        $$
        \item There exists a smooth function $u:\mathbb{R}\to\mathbb{R}$ such that for any point $p\in M$
        $$
  \tr_g(f)= u(t).
        $$
    \end{enumerate}
    
    Then, for any $t\in [0, T_{\rm max})$, the torsional rigidity $\mathcal{T}(\Omega_t)$ os $\Omega$ with respect to $g_t$ is bounded from above by
  
$$
\begin{aligned}
    \mathcal{T}(\Omega_t)\leq&
e^{\int_0^t\left(U(s)+\frac{1}{2}u(s)\right)ds}\mathcal{T}(\Omega_0).
\end{aligned}
$$
\end{theorem}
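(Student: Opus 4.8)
The plan is to mirror the structure of the proof of Theorem \ref{teo:lower}, but this time using the \emph{infimum} characterization of Proposition \ref{prop:lower_bounds_torsion} instead of the supremum characterization, since we are after an upper bound. The key point is that to bound $\mathcal{T}(\Omega_t)$ from above via an infimum we only need to exhibit, for each $t$, \emph{one} admissible competitor vector field $X_t$ with ${\rm div}_{g_t}X_t=-1$, and control its energy $\int_\Omega \Vert X_t\Vert^2_{g_t}\, d\V_t$ along the flow.

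The natural competitor is $X_0:=\nabla^{g_0}E_0$, where $E_0$ is the solution of \eqref{eq:unua} for the initial metric $g_0$; it satisfies ${\rm div}_{g_0}X_0=\Delta_{g_0}E_0=-1$. First I would check that $X_0$ remains admissible for all $t$: by item (4) of Proposition \ref{prop:general}, $\frac{\partial}{\partial t}{\rm div}_{g_t}X_0=\tfrac12 X_0(\tr_g(f))=\tfrac12 X_0(u(t))=0$ because $u$ depends only on $t$. Hence ${\rm div}_{g_t}X_0=-1$ for every $t\in[0,T_{\rm max})$, so $X_0$ is a legitimate competitor at every time. This is precisely why hypothesis (2) is imposed as an \emph{equality} $\tr_g(f)=u(t)$ rather than as two-sided bounds.

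Next I would introduce $H(t):=\int_\Omega \Vert X_0\Vert^2_{g_t}\, d\V_t$ and differentiate using items (1) and (3) of Proposition \ref{prop:general}:
$$
H'(t)=\int_\Omega f(X_0,X_0)\, d\V_t+\frac12\int_\Omega \Vert X_0\Vert^2_{g_t}\,\tr_g(f)\, d\V_t
\leq\left(U(t)+\tfrac12 u(t)\right)H(t),
$$
where the inequality uses $f(X_0,X_0)\le U(t)g_t(X_0,X_0)=U(t)\Vert X_0\Vert^2_{g_t}$ from hypothesis (1) and $\tr_g(f)=u(t)$ from hypothesis (2). Grönwall's inequality then yields $H(t)\le e^{\int_0^t (U(s)+\frac12 u(s))\,ds}H(0)$. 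Since $H(0)=\int_\Omega \Vert\nabla^{g_0}E_0\Vert^2_{g_0}\, d\V_0=\mathcal{T}(\Omega_0)$ (the same integration-by-parts identity used in the proofs of Propositions \ref{prop:upper_bounds_torsion} and \ref{prop:lower_bounds_torsion}), and since Proposition \ref{prop:lower_bounds_torsion} gives $\mathcal{T}(\Omega_t)\le \int_\Omega \Vert X_0\Vert^2_{g_t}\, d\V_t=H(t)$, the conclusion follows immediately.

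I do not expect any serious obstacle: the argument is a direct dualization of the lower-bound proof. The only point requiring care is the admissibility check in the second paragraph — one must verify that the fixed vector field $X_0$ keeps divergence $-1$ along the flow, which is exactly where the equality hypothesis on $\tr_g(f)$ is used; if one only had two-sided bounds $l(t)\le\tr_g(f)\le u(t)$, then $X_0$ would fail to be divergence-free-to-$-1$ and one would instead have to rescale or correct $X_0$, losing the clean exponential. A minor technical remark worth including is that $\Omega$ having smooth boundary and compact closure guarantees $E_0\in C^\infty(\overline\Omega)$, so $X_0\in\mathfrak{X}(\Omega)$ is genuinely smooth and the differentiation of $H$ under the integral sign is justified on the compact set $\overline\Omega$.
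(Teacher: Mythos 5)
Your proposal is correct and follows essentially the same route as the paper's proof: the fixed competitor $X_0=\nabla^{g_0}E_0$, the observation via Proposition \ref{prop:general}(4) that its divergence remains $-1$ because $\tr_g(f)=u(t)$ is spatially constant, the differential inequality $H'(t)\leq\bigl(U(t)+\tfrac12 u(t)\bigr)H(t)$ from items (1) and (3), and the conclusion via Proposition \ref{prop:lower_bounds_torsion} together with $H(0)=\mathcal{T}(\Omega_0)$. No gaps; your remarks on why the equality hypothesis on $\tr_g(f)$ is needed and on the smoothness of $E_0$ are accurate complements to the paper's argument.
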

\begin{proof}
    Let us set $X=\nabla^{g_0}E^0$ with  $E_0$  the solution for the problem \eqref{eq:unua} for the metric $g_0$ in $\Omega$. Then by proposition \ref{prop:general} 
    $$
\begin{aligned}
    \frac{\partial}{\partial t}{\rm div}_{g_t}X=\frac{1}{2}X(\tr_g(f))=\frac{1}{2}X(u(t))=0.
\end{aligned}
    $$
    This allows us to conclude that the divergence of $X$ is constant throughout the flow and hence
    $$
{\rm div}_{g_t}X={\rm div}_{g_0}X={\rm div}_{g_0}\nabla^{g_0}E^0=\Delta_{g_0}E^0=-1.
    $$
    By using proposition \ref{prop:lower_bounds_torsion} 
    $$
\mathcal{T}(\Omega_t)\leq \int_{\Omega}\Vert X\Vert^2\, d\V_t.
    $$
    We must therefore compute the derivative of the following function
    $$
t\mapsto F(t):=\int_{\Omega}\Vert X\Vert^2\, d\V_t.
    $$
    By using again proposition \ref{prop:general}
    $$
\begin{aligned}
    F'(t)=\int_\Omega f(X,X)\, d\V_t+\frac{1}{2}u(t)\int_\Omega\Vert X\Vert^2\, d\V_t\leq \left(U(t)+\frac{1}{2}u(t)\right)F(t).
\end{aligned}
    $$
    Then 
    $$
    \begin{aligned}
        F(t)\leq &e^{\int_0^t\left(U(s)+\frac{1}{2}u(s)\right)ds}F(0)=
e^{\int_0^t\left(U(s)+\frac{1}{2}u(s)\right)ds}\int_{\Omega}g_0(X,X)\, d\V_0\\
=&
e^{\int_0^t\left(U(s)+\frac{1}{2}u(s)\right)ds}\int_{\Omega}\Vert \nabla^{g_0}E^0\Vert^2\, dV_0=e^{\int_0^t\left(U(s)+\frac{1}{2}u(s)\right)ds}\mathcal{T}(\Omega_0),
    \end{aligned}
    $$
    and the theorem is proved.
\end{proof}
\section{Torsional rigidity under Ricci flow and IMCF}\label{sec:cinc} In this section we will concretize the lower and upper bounds obtained in theorems \ref{teo:lower} and \ref{teo:upper} for the case of Ricci and Inverse Mean Curvature flow
\subsection{Ricci Flow}
The Ricci flow was introduced in Hamilton's in 1982 in the paper ``Three-manifolds with positive Ricci curvature''.
The Ricci flow is a way to deform a Riemannian manifold $(M^n,g)$ using the Ricci tensor in the following way, 
    $$
\frac{\partial}{\partial t}g_t=-2\Ric_{g_t}.
$$
Namely,
\begin{definition}\cite{Andrews}
    We call $(g(t))_{t\in [0,T]}$ the Ricci flow solution for the initial condition $g(0)=g_0 \in \mathcal{M}(M^n)$ if for all $t\in [0,T)$ 
    $$
\frac{\partial}{\partial t}g^t_{ij}=-2{\rm Ric}_{ij}.
$$
\end{definition}
Therefore we can summaries the properties of interest under the evolution of $g_t$ by Ricci flow as follows, 

 \begin{proposition}\label{prop:rf}
    Let $\{g_t\}_{t\in [0,T_{\rm max})}$ be a solution to the geometric flow $\frac{\partial g_t}{\partial t}=f$ on the $n$-dimensional manifold $M$.  Then for any $t\in [0,T_{\rm max})$:
    \begin{enumerate}
        \item $ \frac{\partial (d\V_{t})}{ \partial t }=
        -{\rm Scal}_{g_t}d\V_{t}$,
        \item $
\frac{\partial}{\partial t}\Vert \nabla^{g_t}u\Vert^2_{g_t}=2{\rm Ric}_{g_t}(\nabla^{g_t}u,\nabla^{g_t} u)$, 
\item $
\frac{\partial}{\partial t}g_t(X,X)=-2{\rm Ric}_{g_t}(X,X)$,  
\item and $  \frac{\partial}{\partial t} {\rm div}_{g_t}X=-X({\rm Scal}_{g_t})$.
\end{enumerate}
\end{proposition}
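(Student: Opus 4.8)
The plan is to specialize Proposition \ref{prop:general} to the defining equation of the Ricci flow, namely $f=-2\Ric_{g_t}$. Each of the four items then follows by a direct substitution, the only extra ingredient being the contraction identity relating the trace of the Ricci tensor to the scalar curvature.

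First, for item (1) I would substitute $f=-2\Ric_{g_t}$ into the formula $\frac{\partial (d\V_{t})}{\partial t}=\frac12\tr_g(f)\,d\V_{t}$ of Proposition \ref{prop:general}. Since $\tr_{g_t}(\Ric_{g_t})=g_t^{ij}(\Ric_{g_t})_{ij}={\rm Scal}_{g_t}$ by definition of the scalar curvature, one gets $\tr_{g_t}(f)=-2\,{\rm Scal}_{g_t}$, and hence $\frac{\partial (d\V_{t})}{\partial t}=-{\rm Scal}_{g_t}\,d\V_{t}$.

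For items (2) and (3) I would plug $f=-2\Ric_{g_t}$ directly into $\frac{\partial}{\partial t}\Vert\nabla^{g_t}u\Vert^2_{g_t}=-f(\nabla^{g_t}u,\nabla^{g_t}u)$ and $\frac{\partial}{\partial t}g_t(X,X)=f(X,X)$, obtaining $2\Ric_{g_t}(\nabla^{g_t}u,\nabla^{g_t}u)$ and $-2\Ric_{g_t}(X,X)$ respectively. For item (4) the same substitution in $\frac{\partial}{\partial t}{\rm div}_{g_t}X=\frac12 X(\tr_g(f))$, combined again with $\tr_{g_t}(f)=-2\,{\rm Scal}_{g_t}$, yields $\frac{\partial}{\partial t}{\rm div}_{g_t}X=-X({\rm Scal}_{g_t})$.

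There is essentially no obstacle here: the analytic content is entirely contained in Proposition \ref{prop:general}, and the only additional fact used is the tensorial identity $\tr_g\Ric_g={\rm Scal}_g$. The sole point requiring a little care is bookkeeping of the sign in the Ricci flow equation $\frac{\partial}{\partial t}g_t=-2\Ric_{g_t}$; note also that $u$, $X$ and their coordinate components are kept fixed in $t$ while only the metric evolves, which is precisely the setting in which the formulas of Proposition \ref{prop:general} were derived.
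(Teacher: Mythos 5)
Your proposal is correct and takes exactly the same route as the paper: the paper's proof simply notes that $f=-2\Ric_{g_t}$ and $\tr_{g_t}(f)=-2\,{\rm Scal}_{g_t}$ and then invokes Proposition \ref{prop:general}. Your substitution and sign bookkeeping in all four items check out.
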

\begin{proof}From the notation of the previous section 
 $$
 f=-2{\rm Ric},\quad \tr_g(f)=-2{\rm Scal}_{g_t}.
 $$
 then, the proposition follows directly from the proposition \ref{prop:general}
\end{proof}
 Thence theorem \ref{teo:lower} can be written as
 \begin{theorem}
Let $(M^n,(g_t)_{t\in [0,T_{\rm max})})$ be a Ricci flow solution. Suppose that for any $t\in [0,T_{\rm max})$, any $p\in M$ and any $v\in T_pM$, there exists a function $B:\mathbb{R}\to\mathbb{R}$ such that 
$$
{\rm Ric}_{g_t}(v,v)\leq B(t)g_t(v,v). 
$$
Suppose moreover that for any $t\in [0,T_{\rm max})$ and any $p\in M$
$$
b(t)\leq {\rm Scal}_{g_t}(p)\leq c(t).
$$
Then for any precompact domain $\Omega\subset M$ with smooth boundary 
$$
\begin{aligned}
\mathcal{T}(\Omega_t)\geq& e^{\int_0^t\left(b(s)-2c(s)-2B(s)\right)ds}\mathcal{T}(\Omega_0). 
\end{aligned}
$$
 \end{theorem}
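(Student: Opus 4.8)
\emph{Proof proposal.} The plan is to read this statement as nothing more than the Ricci-flow specialization of Theorem \ref{teo:lower}, obtained by taking the deforming tensor to be $f=-2\Ric_{g_t}$ and then translating the curvature hypotheses into the two hypotheses of that theorem. Concretely, I would first recall, exactly as in the proof of Proposition \ref{prop:rf}, that along a Ricci flow solution one has $f=-2\Ric_{g_t}$ and hence $\tr_g(f)=-2\,{\rm Scal}_{g_t}$; these are the only structural inputs needed.

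Next I would convert the hypotheses. From $\Ric_{g_t}(v,v)\leq B(t)\,g_t(v,v)$ for all $p\in M$ and $v\in T_pM$, multiplying by $-2$ (which reverses the inequality) gives $f(v,v)=-2\Ric_{g_t}(v,v)\geq -2B(t)\,g_t(v,v)$, so hypothesis (1) of Theorem \ref{teo:lower} holds with $L(t):=-2B(t)$. Similarly, from $b(t)\leq {\rm Scal}_{g_t}(p)\leq c(t)$ we get $-2c(t)\leq \tr_g(f)=-2\,{\rm Scal}_{g_t}(p)\leq -2b(t)$, so hypothesis (2) holds with $l(t):=-2c(t)$ and $u(t):=-2b(t)$. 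Since $B,b,c$ are assumed smooth, the resulting $L,l,u$ are smooth, so the hypotheses of Theorem \ref{teo:lower} are met verbatim. Applying that theorem then yields
$$
\mathcal{T}(\Omega_t)\geq e^{\int_0^t\left(l(s)-\frac{1}{2}u(s)+L(s)\right)ds}\mathcal{T}(\Omega_0),
$$
and substituting and simplifying the exponent,
$$
l(s)-\tfrac{1}{2}u(s)+L(s)=-2c(s)-\tfrac{1}{2}\bigl(-2b(s)\bigr)+\bigl(-2B(s)\bigr)=b(s)-2c(s)-2B(s),
$$
which is precisely the claimed inequality.

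There is no genuine obstacle here: the argument is a direct substitution into Theorem \ref{teo:lower}. The only points requiring care are the sign bookkeeping when multiplying the curvature inequalities by the negative constant $-2$ (so that $\leq$ becomes $\geq$ and the upper/lower roles of $b,c$ interchange in passing to $l,u$), and the routine algebraic simplification of the exponent. One may optionally remark that the theorem applies on the full maximal interval $[0,T_{\rm max})$ since the estimates in Theorem \ref{teo:lower} and Proposition \ref{prop:rf} are pointwise in $t$ and hold wherever the flow is defined.
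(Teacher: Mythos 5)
Your proposal is correct and matches the paper's own treatment: the paper obtains this statement precisely by specializing Theorem \ref{teo:lower} to $f=-2\Ric_{g_t}$, $\tr_g(f)=-2\,{\rm Scal}_{g_t}$ (as in Proposition \ref{prop:rf}), which is exactly your substitution $L=-2B$, $l=-2c$, $u=-2b$ and the same simplification of the exponent to $b-2c-2B$.
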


Furthermore theorem \ref{teo:upper} can be read as
\begin{theorem}Let $(M^n,(g_t)_{t\in [0,T_{\rm max})})$ be a Ricci flow solution. Suppose that for any $t\in [0,T_{\rm max})$, any $p\in M$ and any $v\in T_pM$, there exists a function $A:\mathbb{R}\to\mathbb{R}$ such that 
$$
{\rm Ric}_{g_t}(v,v)\geq A(t)g_t(v,v). 
$$
Suppose moreover that for any $t\in [0,T_{\rm max})$ and any $p\in M$ there exists a function $b:\mathbb{R}\to\mathbb{R}$ such that
$$
 {\rm Scal}_{g_t}(p)=b(t).
$$
Then for any precompact domain $\Omega\subset M$ with smooth boundary 
$$
\begin{aligned}
 \mathcal{T}(\Omega_t)\leq& e^{-\int_0^t\left(2A(s)+b(s)\right)ds}\mathcal{T}(\Omega_0). 
\end{aligned}
$$  
\end{theorem}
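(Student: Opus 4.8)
The plan is to deduce this statement directly from Theorem \ref{teo:upper}, specializing the abstract flow $\frac{\partial g_t}{\partial t}=f$ to the Ricci flow by taking $f=-2\,{\rm Ric}_{g_t}$. First I would record, via Proposition \ref{prop:rf} (equivalently Proposition \ref{prop:general} with $f=-2\,{\rm Ric}_{g_t}$), that $\tr_g(f)=-2\,{\rm Scal}_{g_t}$. The hypothesis ${\rm Scal}_{g_t}(p)=b(t)$ then says precisely that $\tr_g(f)=-2b(t)$ depends on $t$ alone, so the second hypothesis of Theorem \ref{teo:upper} is satisfied with $u(t):=-2b(t)$.

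Next I would convert the Ricci lower bound into the pointwise tensor upper bound on $f$ required by the first hypothesis of Theorem \ref{teo:upper}: from ${\rm Ric}_{g_t}(v,v)\geq A(t)g_t(v,v)$ for every $p\in M$ and $v\in T_pM$, multiplication by $-2$ gives $f(v,v)=-2\,{\rm Ric}_{g_t}(v,v)\leq -2A(t)g_t(v,v)$, so we may take $U(t):=-2A(t)$.

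Finally, substituting $U(t)=-2A(t)$ and $u(t)=-2b(t)$ into the conclusion of Theorem \ref{teo:upper} yields
$$
\mathcal{T}(\Omega_t)\leq e^{\int_0^t\left(U(s)+\frac{1}{2}u(s)\right)ds}\mathcal{T}(\Omega_0)=e^{\int_0^t\left(-2A(s)-b(s)\right)ds}\mathcal{T}(\Omega_0)=e^{-\int_0^t\left(2A(s)+b(s)\right)ds}\mathcal{T}(\Omega_0),
$$
which is exactly the asserted inequality.

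There is no real obstacle here; the argument is a verification that the Ricci flow fits the template of Theorem \ref{teo:upper}. The only point deserving care is that Theorem \ref{teo:upper} genuinely requires $\tr_g(f)$ to be a function of $t$ only — not merely bounded — which is why the constant-scalar-curvature hypothesis ${\rm Scal}_{g_t}\equiv b(t)$, rather than a two-sided bound, is indispensable: it is what guarantees (through Proposition \ref{prop:rf}(4)) that the vector field $X=\nabla^{g_0}E^0$ used in the proof of Theorem \ref{teo:upper} keeps divergence $-1$ along the whole flow, so that the estimate via Proposition \ref{prop:lower_bounds_torsion} closes.
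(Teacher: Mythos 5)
Your proposal is correct and follows essentially the same route as the paper: the paper obtains this statement by reading Theorem \ref{teo:upper} for the Ricci flow with $f=-2\,{\rm Ric}_{g_t}$, so that $U(t)=-2A(t)$ and $u(t)=\tr_g(f)=-2b(t)$, exactly as you do. Your closing remark on why the constant-scalar-curvature hypothesis (rather than a mere bound) is needed matches the role it plays in the paper's proof of Theorem \ref{teo:upper}, namely keeping ${\rm div}_{g_t}\nabla^{g_0}E^0=-1$ along the flow.
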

Since  by proposition \ref{prop:rf} we know that
$$
\frac{d}{dt}\V(\Omega_t)=\frac{d}{dt}\int_\Omega d\V_t=-\int {\rm Scal}_{g_t}\, d\V_t=-b(t)\V(\Omega_t),
$$
Thence joining the above two theorems we can state the following theorem

\begin{samelettertheoremA}Let $(M^n,(g_t)_{t\in [0,T_{\rm max})})$ be a Ricci flow solution. Suppose that for any $t\in [0,T_{\rm max})$ and any $p\in M$ there exists a function $b:\mathbb{R}\to\mathbb{R}$ such that
$$
 {\rm Scal}_{g_t}(p)=b(t).
$$
Suppose moreover that for any $t\in [0,T_{\rm max})$, any $p\in M$ and any $v\in T_pM$, there exists two functions $A,B:\mathbb{R}\to\mathbb{R}$ such that 
$$
A(t)g_t(v,v)\leq {\rm Ric}_{g_t}(v,v)\leq B(t)g_t(v,v). 
$$
Then for any precompact domain $\Omega\subset M$ with smooth boundary,
$$
\begin{aligned}
 e^{\int_0^t\left(-b(s)-2B(s)\right)ds}\mathcal{T}(\Omega_0)\leq\mathcal{T}(\Omega_t)\leq& e^{\int_0^t\left(-2A(s)-b(s)\right)ds}\mathcal{T}(\Omega_0). 
\end{aligned}
$$
Which implies
$$
\begin{aligned}
 e^{-2\int_0^tB(s)ds}\frac{\mathcal{T}(\Omega_0)}{\V(\Omega_0)}\leq\frac{\mathcal{T}(\Omega_t)}{\V(\Omega_t)}\leq& e^{-2\int_0^tA(s)ds}\frac{\mathcal{T}(\Omega_0)}{\V(\Omega_0)}. 
\end{aligned}
$$
Where here  ${\rm V}(\Omega_t)$ denotes the volume of $\Omega$ with respect to $g_t$.
\end{samelettertheoremA}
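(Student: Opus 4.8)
The plan is to assemble the statement from the two Ricci-flow theorems stated immediately above (the specializations of Theorems \ref{teo:lower} and \ref{teo:upper} to $f=-2\Ric$) together with the elementary ODE governing the volume. Nothing new needs to be proved; the task is to feed the right functions into the results already in hand and to watch how the exponents collapse under the constancy hypothesis on the scalar curvature.

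First, for the upper bound I would invoke Theorem \ref{teo:upper} in its Ricci-flow form verbatim: the assumption ${\rm Scal}_{g_t}(p)=b(t)$ is exactly the ``scalar curvature is a spatial constant'' hypothesis it requires (with $u(t)=-2b(t)$ in the notation of Proposition \ref{prop:rf}, equivalently $f=-2\Ric$ has $\tr_g f=-2b(t)$ pointwise), and $A(t)g_t(v,v)\leq {\rm Ric}_{g_t}(v,v)$ supplies the needed lower Ricci bound, i.e.\ $f(v,v)=-2\Ric_{g_t}(v,v)\leq -2A(t)g_t(v,v)$. This gives $\mathcal{T}(\Omega_t)\leq e^{-\int_0^t(2A(s)+b(s))\,ds}\mathcal{T}(\Omega_0)$ directly. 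Second, for the lower bound I would apply Theorem \ref{teo:lower} in its Ricci-flow form with $c(t)=b(t)$: because ${\rm Scal}_{g_t}\equiv b(t)$ we may take the lower and upper scalar-curvature bounds to coincide, and $B(t)$ furnishes the upper Ricci bound. The exponent $\int_0^t(b(s)-2c(s)-2B(s))\,ds$ then collapses to $\int_0^t(-b(s)-2B(s))\,ds$, yielding $\mathcal{T}(\Omega_t)\geq e^{\int_0^t(-b(s)-2B(s))\,ds}\mathcal{T}(\Omega_0)$, which together with the previous inequality is the first displayed conclusion.

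Third, for the volume-normalized inequalities I would use, exactly as recorded just above the statement, that under Ricci flow $\partial_t(d\V_t)=-{\rm Scal}_{g_t}\,d\V_t$ (Proposition \ref{prop:rf}), so that, since the scalar curvature is the spatial constant $b(t)$,
$$
\frac{d}{dt}\V(\Omega_t)=-\int_\Omega {\rm Scal}_{g_t}\,d\V_t=-b(t)\,\V(\Omega_t),
$$
hence $\V(\Omega_t)=e^{-\int_0^t b(s)\,ds}\V(\Omega_0)$. Dividing the two torsional-rigidity inequalities by this identity, the $e^{\pm\int_0^t b(s)\,ds}$ factors cancel, leaving
$$
e^{-2\int_0^t B(s)\,ds}\frac{\mathcal{T}(\Omega_0)}{\V(\Omega_0)}\leq \frac{\mathcal{T}(\Omega_t)}{\V(\Omega_t)}\leq e^{-2\int_0^t A(s)\,ds}\frac{\mathcal{T}(\Omega_0)}{\V(\Omega_0)},
$$
which is the claimed implication.

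There is essentially no substantive obstacle: the content lives entirely in Theorems \ref{teo:lower} and \ref{teo:upper} (built in turn on Propositions \ref{prop:upper_bounds_torsion}, \ref{prop:lower_bounds_torsion} and \ref{prop:general}), and the present statement is their specialization to $f=-2\Ric$ under the extra hypothesis that ${\rm Scal}_{g_t}$ is spatially constant. The one point deserving a line of care is that it is precisely this constancy that allows Theorem \ref{teo:lower} to be applied with $c=b$ rather than with a genuinely larger upper bound $c$; without it the lower exponent would degrade. I would also note in passing that the hypothesis is not vacuous — it holds automatically on homogeneous spaces, since the Ricci flow preserves the isometry group — which is the regime exploited in the later applications.
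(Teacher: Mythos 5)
Your proposal is correct and matches the paper's own route: the statement is assembled from the Ricci-flow specializations of Theorems \ref{teo:lower} and \ref{teo:upper} (with $c=b$ collapsing the lower exponent to $-b-2B$ and $U=-2A$, $u=-2b$ giving the upper one), and the normalized inequalities follow from $\frac{d}{dt}\V(\Omega_t)=-b(t)\V(\Omega_t)$ exactly as you divide out. No gaps to report.
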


Furthermore, if the Ricci tensor is positive (negative) definited  along the flow we can always use $B(t)=b(t)$ ($A(t)=b(t)$) and we obtain the following corollary  
\begin{corollary}\label{cor:Ric}
    Let $(M^n,(g_t)_{t\in [0,T_{\rm max})})$ be a Ricci flow solution. Suppose that for any $t\in [0,T_{\rm max})$ and any $p\in M$
$$
 {\rm Scal}_{g_t}(p)=b(t).
$$
Suppose moreover that there exist $T_0<T_{\rm max}$ such that for any $t\leq T_0$, any $p\in M$ and any $v\in T_pM$
$$
\Ric_{g_t}\leq(\geq) 0.
$$
Then,
$$
t\mapsto \frac{\mathcal{T}(\Omega_t)}{\V(\Omega_t)}
$$
is a non-decreasing (non-increasing) function for $t\in [0,T_0]$, and 
$$
t\mapsto \frac{\mathcal{T}(\Omega_t)}{\V^3(\Omega_t)}
$$
is a non-increasing (non-decreasing) function for $t\in [0,T_0]$.
\end{corollary}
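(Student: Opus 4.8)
The plan is to deduce everything from Theorem~A by choosing the bounding functions $A,B$ appropriately and then exploiting the time-translation invariance of the Ricci flow to turn comparisons with the initial time into genuine monotonicity.

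\emph{Step 1: choosing $A$ and $B$.} First I would use the \emph{spatial} constancy of the scalar curvature to convert a one-sided sign condition on $\Ric_{g_t}$ into a two-sided pinching. At a point $p$ the eigenvalues $\lambda_1(t,p)\le\cdots\le\lambda_n(t,p)$ of $\Ric_{g_t}$ sum to $\mathrm{Scal}_{g_t}(p)=b(t)$. If $\Ric_{g_t}\le 0$ for $t\le T_0$, then every $\lambda_i\le 0$, so $\lambda_n\le 0$ and $\lambda_1=b(t)-\sum_{i\ge 2}\lambda_i\ge b(t)$, whence $b(t)\,g_t\le\Ric_{g_t}\le 0$; thus Theorem~A applies with $A(t)=b(t)$ and $B(t)=0$. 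Symmetrically, if $\Ric_{g_t}\ge 0$ for $t\le T_0$ then $0\le\Ric_{g_t}\le b(t)\,g_t$, so one may take $A(t)=0$ and $B(t)=b(t)$. I would also record, from Proposition~\ref{prop:rf}, that $\frac{d}{dt}\V(\Omega_t)=-b(t)\V(\Omega_t)$, hence $\V(\Omega_t)=e^{-\int_0^t b(s)\,ds}\V(\Omega_0)$ and $\V(\Omega_t)^3=e^{-3\int_0^t b(s)\,ds}\V(\Omega_0)^3$.

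\emph{Step 2: comparisons with the initial time.} In the case $\Ric_{g_t}\le 0$, the volume-normalized inequality of Theorem~A with $B=0$ gives $\mathcal{T}(\Omega_t)/\V(\Omega_t)\ge \mathcal{T}(\Omega_0)/\V(\Omega_0)$, while the upper bound of Theorem~A with $A=b$ reads $\mathcal{T}(\Omega_t)\le e^{-3\int_0^t b(s)\,ds}\mathcal{T}(\Omega_0)$; dividing by $\V(\Omega_t)^3$ and using the formula from Step~1 yields $\mathcal{T}(\Omega_t)/\V(\Omega_t)^3\le \mathcal{T}(\Omega_0)/\V(\Omega_0)^3$. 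In the case $\Ric_{g_t}\ge 0$ the same substitutions with the roles of the upper and lower bounds of Theorem~A exchanged give $\mathcal{T}(\Omega_t)/\V(\Omega_t)\le \mathcal{T}(\Omega_0)/\V(\Omega_0)$ and $\mathcal{T}(\Omega_t)/\V(\Omega_t)^3\ge \mathcal{T}(\Omega_0)/\V(\Omega_0)^3$.

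\emph{Step 3: from initial comparisons to monotonicity.} Finally I would fix $0\le t_1\le t_2\le T_0$ and observe that $s\mapsto g_{t_1+s}$ is again a Ricci flow solution on $[0,T_{\rm max}-t_1)$ satisfying the same hypotheses on $[0,T_0-t_1]$; applying the comparisons of Step~2 to this shifted flow at time $s=t_2-t_1$ replaces the index $0$ by $t_1$ and $t$ by $t_2$. In the case $\Ric\le 0$ this gives $\mathcal{T}(\Omega_{t_2})/\V(\Omega_{t_2})\ge \mathcal{T}(\Omega_{t_1})/\V(\Omega_{t_1})$ and $\mathcal{T}(\Omega_{t_2})/\V(\Omega_{t_2})^3\le \mathcal{T}(\Omega_{t_1})/\V(\Omega_{t_1})^3$, i.e.\ the first ratio is non-decreasing and the second non-increasing on $[0,T_0]$, and the case $\Ric\ge 0$ reverses both. (Alternatively one could differentiate the ratios directly, but the time-translation argument sidesteps any regularity discussion of $t\mapsto\mathcal{T}(\Omega_t)$.) I expect the only genuinely substantive point to be the linear-algebra step of Step~1 — turning the sign condition on $\Ric_{g_t}$ plus $\mathrm{Scal}_{g_t}=b(t)$ into the two-sided pinching $b(t)g_t\le\Ric_{g_t}\le 0$ (resp.\ $0\le\Ric_{g_t}\le b(t)g_t$); the rest is a direct substitution into Theorem~A together with the time-translation invariance of the flow.
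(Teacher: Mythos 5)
Your proposal is correct and follows essentially the same route as the paper: the paper's (implicit) proof is exactly your Step~1 choice $A(t)=b(t),\,B(t)=0$ (resp.\ $A(t)=0,\,B(t)=b(t)$), obtained from the sign of $\Ric_{g_t}$ together with $\tr_{g_t}\Ric_{g_t}=b(t)$, plugged into Theorem~\ref{Thm:Ric} and the volume evolution $\V(\Omega_t)=e^{-\int_0^t b(s)\,ds}\V(\Omega_0)$. Your Step~3 time-translation argument, upgrading the comparison with $t=0$ to genuine monotonicity between arbitrary times $t_1\le t_2\le T_0$, is a point the paper leaves implicit, and you handle it correctly.
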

\subsubsection{Einstein metrics}\label{sec:Einstein}
In the case of Einstein metrics we obtained an exact solution for the torsional rigidity, 
        $
    \mathcal{T}(\Omega_t)= \left(1-2\lambda t\right)^{\frac{n+2}{2}} \mathcal{T}(\Omega_0)
    $.
Let $(M^n,g_0)$ be an Einstein manifold, i.e. $\Ric_{g_0}=\lambda\cdot g_0$. Then the Ricci flow solution for the initial condition $g_0$ will be given by the following equation, 
    $$
    g_t= (1-2\lambda t)g_0,
    $$
    for $t\in [0,\infty)$ if $\lambda\leq 0$ or $t\in [0,\frac{1}{2\lambda})$ if $\lambda > 0$.
    Thus the scalar curvature will given by the following function, 
    $$
    {\rm Scal}_{g_t}= \frac{n\lambda}{1-2\lambda \cdot t}.
    $$
    The Ricci curvature satisfies
    $$
\Ric_{g_t}=\frac{\lambda}{1-2\lambda \cdot t}g_t
    $$
    Therefore, the Ricci flow solution will satisfy the conditions of the previous theorem and we can set $A(t)=B(t)=\frac{\lambda}{1-2\lambda \cdot t}$ and $b(t)=\frac{n\lambda}{1-2\lambda \cdot t}$. Then for any precompact domain $\Omega\subset M$ with smooth boundary
    $$
\mathcal{T}(\Omega_t)=e^{\int_0^t\frac{-(n+2)\lambda}{1-2\lambda s}ds}\mathcal{T}(\Omega_0)=\left(1-2\lambda t\right)^{\frac{n+2}{2}}\mathcal{T}(\Omega_0).
    $$
    Since the volume evolves as $\V(\Omega_t)=\left(1-2\lambda t\right)^{\frac{n}{2}}\V(\Omega_0)$ can be stated that
$$
\frac{\mathcal{T}(\Omega_t)}{\V^\frac{n+2}{2}(\Omega_t)}=\frac{\mathcal{T}(\Omega_0)}{\V^\frac{n+2}{2}(\Omega_0)}.
$$
    
\subsubsection{Normalized Ricci Flow}
In the normalized Ricci flow we have that
$$
\frac{\partial}{\partial t}g_t=-2\Ric_{g_t}+\frac{2}{n}\frac{\int_M {\rm Scal}_{g_t}\, d\V_t}{{\rm V}(M_t)}g_t=-2\Ric_{g_t}+\frac{2}{n} \overline{{\rm Scal}_{g_t}}g_t
$$
In the notation of the previous section 
$$
f=-2\Ric_{g_t}+\frac{2}{n} \overline{{\rm Scal}_{g_t}}g_t,\quad \tr_g(f)=2\left(\overline{{\rm Scal}_{g_t}}-{\rm Scal}_{g_t}\right)
$$
Thence \ref{teo:lower} can be written as
 \begin{theorem}
Let $(M^n,(g_t)_{t\in [0,T_{\rm max})})$ be a normalized Ricci flow solution. Suppose that for any $t\in [0,T_{\rm max})$, any $p\in M$ and any $v\in T_pM$, there exists a function $A:\mathbb{R}\to\mathbb{R}$ such that 
$$
{\rm Ric}_{g_t}(v,v)-\frac{1}{n}\overline{{\rm Scal}_{g_t}}g_t(v,v)\leq A(t)g_t(v,v). 
$$
Suppose moreover that for any $t\in [0,T_{\rm max})$ and any $p\in M$
$$
b(t)\leq \overline{{\rm Scal}_{g_t}}-{\rm Scal}_{g_t}(p)\leq c(t).
$$
Then for any precompact domain $\Omega\subset M$ with smooth boundary 
$$
\begin{aligned}
\mathcal{T}(\Omega_t)\geq& e^{\int_0^t\left(2b(s)-c(s)-2A(s)\right)ds}\mathcal{T}(\Omega_0). 
\end{aligned}
$$
 \end{theorem}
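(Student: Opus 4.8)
The plan is to obtain this statement as a direct corollary of Theorem \ref{teo:lower}, applied to the geometric flow whose velocity is $f=-2\Ric_{g_t}+\frac{2}{n}\overline{{\rm Scal}_{g_t}}g_t$. The first task is to verify that the two hypotheses of Theorem \ref{teo:lower} --- a pointwise lower bound $f(v,v)\geq L(t)g_t(v,v)$ and two-sided bounds $l(t)\leq \tr_g(f)\leq u(t)$ --- follow from the assumptions made here, and to read off the correct functions $L$, $l$, $u$.

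For the first hypothesis I would start from the assumed inequality ${\rm Ric}_{g_t}(v,v)-\frac{1}{n}\overline{{\rm Scal}_{g_t}}g_t(v,v)\leq A(t)g_t(v,v)$, multiply by $-2$ so that the inequality reverses, and recognize the resulting left-hand side as exactly $f(v,v)$; this yields $f(v,v)\geq -2A(t)\,g_t(v,v)$, so one may take $L(t)=-2A(t)$. For the second hypothesis, recall from the computation preceding the statement that $\tr_g(f)=2\left(\overline{{\rm Scal}_{g_t}}-{\rm Scal}_{g_t}\right)$; feeding in the assumed bounds $b(t)\leq \overline{{\rm Scal}_{g_t}}-{\rm Scal}_{g_t}(p)\leq c(t)$ gives $2b(t)\leq \tr_g(f)\leq 2c(t)$, so one may take $l(t)=2b(t)$ and $u(t)=2c(t)$.

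With these identifications, Theorem \ref{teo:lower} yields
$$
\mathcal{T}(\Omega_t)\geq e^{\int_0^t\left(l(s)-\frac{1}{2}u(s)+L(s)\right)ds}\mathcal{T}(\Omega_0)
= e^{\int_0^t\left(2b(s)-c(s)-2A(s)\right)ds}\mathcal{T}(\Omega_0),
$$
which is precisely the claimed bound.

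The argument is thus essentially bookkeeping, and I do not anticipate a genuine obstacle; the only point requiring care is the sign reversal when passing from the curvature estimate to the estimate on $f$, together with keeping track of the two factors of $2$ coming from $f=-2\Ric_{g_t}+\frac{2}{n}\overline{{\rm Scal}_{g_t}}g_t$ and from $\tr_g(f)=2(\overline{{\rm Scal}_{g_t}}-{\rm Scal}_{g_t})$. One should also note in passing that the normalized Ricci flow genuinely fits the framework of Section \ref{sec:GF} --- its velocity $f$ is a smooth, symmetric $2$-tensor depending on $t$ --- so that Theorem \ref{teo:lower} applies verbatim and no regularity or existence issue beyond what is already subsumed in the hypothesis ``$(g_t)_{t\in[0,T_{\rm max})}$ is a normalized Ricci flow solution'' needs to be addressed.
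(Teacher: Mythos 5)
Your proposal is correct and follows exactly the paper's route: the theorem is obtained by specializing Theorem \ref{teo:lower} to the normalized Ricci flow velocity $f=-2\Ric_{g_t}+\frac{2}{n}\overline{{\rm Scal}_{g_t}}\,g_t$, with the same identifications $L(t)=-2A(t)$, $l(t)=2b(t)$, $u(t)=2c(t)$ coming from $\tr_g(f)=2(\overline{{\rm Scal}_{g_t}}-{\rm Scal}_{g_t})$. Your sign bookkeeping reproduces the stated exponent $\int_0^t\left(2b(s)-c(s)-2A(s)\right)ds$, so there is nothing to add.
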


Furthermore, joining the above theorem with theorem \ref{teo:upper} we can state the following theorem
\begin{theorem}
Let $(M^n,(g_t)_{t\in [0,T_{\rm max})})$ be a normalized Ricci flow solution. Suppose that for any $t\in [0,T_{\rm max})$ and any $p\in M$ 
$$
{\rm Scal}_{g_t}(p)=u(t).
$$
Suppose moreover that  for any $t\in [0,T_{\rm max})$, any $p\in M$ and any $v\in T_pM$, there exists two functions $A,B:\mathbb{R}\to\mathbb{R}$ such that 
$$
A(t)g_t(v,v)\leq Z_{g_t}(v,v)\leq B(t)g_t(v,v), 
$$
where $Z$ is the trace-free Ricci tensor ($Z:=\Ric-\frac{1}{n}{\rm Scal}\cdot g$). 
Then for any precompact domain $\Omega\subset M$ with smooth boundary 
$$
\begin{aligned}
 e^{-\int_0^t2B(s)ds}\mathcal{T}(\Omega_0)\leq\mathcal{T}(\Omega_t)\leq& e^{-\int_0^t2A(s)ds}\mathcal{T}(\Omega_0). 
\end{aligned}
$$    
\end{theorem}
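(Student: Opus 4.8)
The plan is to obtain both inequalities as immediate specializations of Theorems~\ref{teo:lower} and~\ref{teo:upper} to the normalized Ricci flow, for which the evolution tensor $f$ and its trace were already recorded in this subsection, namely
$$
f=-2\Ric_{g_t}+\frac{2}{n}\,\overline{{\rm Scal}_{g_t}}\,g_t,\qquad
\tr_g(f)=2\bigl(\overline{{\rm Scal}_{g_t}}-{\rm Scal}_{g_t}\bigr).
$$
The key first step is the simplification forced by the hypothesis ${\rm Scal}_{g_t}\equiv u(t)$: a function that is constant on $M$ equals its own mean value, so $\overline{{\rm Scal}_{g_t}}=u(t)$ and therefore $\tr_g(f)\equiv 0$ along the whole flow. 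Writing $\Ric=Z+\frac{1}{n}{\rm Scal}\cdot g$ and substituting one finds, for every $p\in M$ and every $v\in T_pM$,
$$
f(v,v)=-2Z_{g_t}(v,v)+\frac{2}{n}\bigl(\overline{{\rm Scal}_{g_t}}-{\rm Scal}_{g_t}\bigr)g_t(v,v)=-2Z_{g_t}(v,v),
$$
so that the assumed two-sided bound $A(t)g_t(v,v)\le Z_{g_t}(v,v)\le B(t)g_t(v,v)$ is equivalent to $-2B(t)g_t(v,v)\le f(v,v)\le -2A(t)g_t(v,v)$.

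For the lower bound I would invoke Theorem~\ref{teo:lower} with $L(t)=-2B(t)$ (which bounds $f$ from below) and with both trace functions of that statement taken to be identically zero, which is legitimate precisely because $\tr_g(f)\equiv 0$; its conclusion reads
$$
\mathcal{T}(\Omega_t)\ \ge\ e^{\int_0^t\left(0-\frac{1}{2}\cdot 0-2B(s)\right)ds}\,\mathcal{T}(\Omega_0)=e^{-\int_0^t 2B(s)\,ds}\,\mathcal{T}(\Omega_0).
$$
For the upper bound I would invoke Theorem~\ref{teo:upper} with $U(t)=-2A(t)$ (which bounds $f$ from above) and with the trace function (denoted $u$ there) equal to $0$; this is exactly the hypothesis of that theorem, since $\tr_g(f)=0$ is in particular independent of the point. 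Its conclusion reads
$$
\mathcal{T}(\Omega_t)\ \le\ e^{\int_0^t\left(-2A(s)+\frac{1}{2}\cdot 0\right)ds}\,\mathcal{T}(\Omega_0)=e^{-\int_0^t 2A(s)\,ds}\,\mathcal{T}(\Omega_0).
$$
Concatenating the two displayed estimates gives the asserted chain of inequalities.

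There is essentially no genuine obstacle in the argument; the only point requiring attention is the collision of notation between the function $u(t)$ in the statement (the common spatial value of the scalar curvature) and the auxiliary functions also called $u$ in Theorems~\ref{teo:lower} and~\ref{teo:upper}: what is actually inserted into those results is the \emph{vanishing} of $\tr_g(f)$, not the value $u(t)$. Once this is clarified, the whole proof is the substitution $f=-2Z$, $\tr_g(f)\equiv 0$ into estimates already established, so no new analytic ingredient is needed. (Had one only assumed ${\rm Scal}_{g_t}$ spatially bounded instead of spatially constant, the term $\tr_g(f)=2(\overline{{\rm Scal}_{g_t}}-{\rm Scal}_{g_t})$ would no longer vanish and one would instead land on the version of Theorem~\ref{teo:lower} with nontrivial $l,u$, exactly as in the preceding theorem of this subsection.)
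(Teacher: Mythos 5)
Your proposal is correct and follows essentially the same route as the paper: the paper also obtains this statement by specializing its general bounds (Theorems~\ref{teo:lower} and~\ref{teo:upper}) to the normalized Ricci flow, using that spatially constant scalar curvature forces $\overline{{\rm Scal}_{g_t}}={\rm Scal}_{g_t}$, hence $\tr_g(f)=0$ and $f=-2Z$, exactly as you do. Your remark disentangling the statement's $u(t)$ from the auxiliary trace functions in those theorems is a fair clarification but introduces no new ingredient.
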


\subsection{Inverse Mean Curvature Flow}\label{sec:IMCF}
Let $\Sigma\subset \mathbb{R}^{n+1}$ a compact and mean convex hypersurface of the $n+1$ dimensional Euclidean space $\mathbb{R}^{n+1}$, the flow by inverse mean curvature is a family of isometric immersions $\varphi:M\times[0,T_{\rm max})\to \mathbb{R}^{n+1}$ from a $n$-dimensional manifold $M$ to $\mathbb{R}^{n+1}$
such that
\begin{equation}\label{eq:IMCF}  
\left\{
\begin{aligned}
    \varphi(M,0)=&\Sigma\\
    \frac{\partial}{\partial t}\varphi(p,t)=&\frac{-\Vec{H}_t(p)}{\left\Vert \Vec{H}_t(p)\right\Vert^2},
\end{aligned}\right.
\end{equation}
where $\Vec{H}_t(p)$ is the mean curvature of $\varphi(\cdot,t)$ at $p\in M$. Several papers have studied the evolution of the first eignenvalue of the Laplace-Beltrami operator under direct or inverse mean curvature flow, \cite{Ho-Pyo} or \cite{Lambert2016inver-34677} . 

The evolution equations for the inverse mean curvature flow can be founded in \cite{Gerhard1999} for instace. The following equation sumarizes some of the relevan properties

\begin{proposition}\label{prop:volution_equations}
Let $\Sigma\subset \mathbb{R}^{n+1}$ a compact and mean convex hypersurface of the $n+1$ dimensional Euclidean space $\mathbb{R}^{n+1}$. Then the equations of the evolution of the metric, volume form and mean curvature under the inverse mean curvature flow will be as follows,
       \begin{equation}
        \left\lbrace\begin{aligned}
            &\frac{\partial}{\partial t}g_{ij}=2\frac{h_{ij}}{H}\\
            &\frac{\partial}{\partial t}d{\rm V}_t=d{\rm V}_t\\
            &\frac{\partial}{\partial t}H=-\Delta \frac{1}{H}
        \end{aligned}\right.
    \end{equation} 
\end{proposition}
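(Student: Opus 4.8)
\emph{Proof strategy.} The plan is to recognise the inverse mean curvature flow \eqref{eq:IMCF} as a purely normal variation of the immersion $\varphi$, and then to read off the three identities: the first two essentially mechanically from Proposition \ref{prop:general}, and the third from the classical variation formula for the second fundamental form. On each $\varphi_t(M)=\Sigma_t$ fix the outward unit normal $\nu$, with the sign conventions for which $H>0$ on a strictly convex hypersurface, so that the mean curvature vector is $\vec H_t=-H\nu$; then \eqref{eq:IMCF} reads $\tfrac{\partial\varphi}{\partial t}=\tfrac1H\,\nu$, i.e.\ the flow moves in the normal direction with speed function $F:=\tfrac1H$. A tangential reparametrising term may be added to the velocity without changing any of the intrinsic quantities below, since it only contributes a Lie derivative.

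For the metric I would differentiate $g_{ij}=\langle\partial_i\varphi,\partial_j\varphi\rangle$ in $t$, commute $\tfrac{\partial}{\partial t}$ past the coordinate derivatives, and substitute $\partial_i(F\nu)=(\partial_iF)\nu+F\,\partial_i\nu$ together with the Weingarten relation $\partial_i\nu=g^{k\ell}h_{ik}\,\partial_\ell\varphi$ and $\langle\nu,\partial_j\varphi\rangle=0$; this collapses to $\tfrac{\partial}{\partial t}g_{ij}=2F\,h_{ij}=\tfrac{2}{H}h_{ij}$. For the volume form I would then invoke the first item of Proposition \ref{prop:general} with $f:=\tfrac{\partial}{\partial t}g=\tfrac{2}{H}h$: since $\tr_g f=\tfrac{2}{H}g^{ij}h_{ij}=\tfrac{2}{H}\cdot H=2$, it follows immediately that $\tfrac{\partial}{\partial t}(d\V_t)=\tfrac12\tr_g(f)\,d\V_t=d\V_t$.

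The evolution of $H$ is the genuinely delicate step and where I expect the main effort to concentrate. The route is the standard one: first differentiate $|\nu|^2=1$ and $\langle\nu,\partial_i\varphi\rangle=0$ to obtain $\tfrac{\partial}{\partial t}\nu=-\nabla^{\Sigma}F$; then differentiate $h_{ij}=-\langle\partial_i\partial_j\varphi,\nu\rangle$ in $t$, commuting time and space derivatives and using the Gauss and Weingarten relations repeatedly, noting that every term carrying the ambient Riemann tensor vanishes because $\mathbb{R}^{n+1}$ is flat; this produces a Simons-type evolution equation of the shape $\tfrac{\partial}{\partial t}h_{ij}=-\nabla_i\nabla_j F+F\,(h^2)_{ij}$. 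Tracing against the evolving metric, i.e.\ combining $\tfrac{\partial}{\partial t}g^{ij}=-\tfrac{2}{H}h^{ij}$ with the last relation, then yields the evolution equation for $H=g^{ij}h_{ij}$. The points needing care are the bookkeeping of the sign conventions for $\nu$, $h_{ij}$ and $H$, and the justification that $\tfrac{\partial}{\partial t}$ commutes with the intrinsic Laplacian of the moving metric; alternatively one may quote the evolution equations directly from \cite{Gerhard1999} and check that they specialise to the stated form.
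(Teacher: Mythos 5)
The paper itself gives no derivation of this proposition: it is quoted from Huisken--Polden \cite{Gerhard1999}, and the only facts actually used afterwards are the first two lines together with $0<h_{ij}/H\leq g_{ij}$ and $\tr_g(f)=2$. Your treatment of those two lines is correct and is exactly the computation the paper implicitly relies on: $\partial_t g_{ij}=2Fh_{ij}$ with $F=1/H$ from the Weingarten relation, and then $\tr_g f=2$ fed into Proposition \ref{prop:general}(1) to get $\partial_t\,d{\rm V}_t=d{\rm V}_t$.

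The gap is in the third line. Your plan is the standard one and your intermediate formulas are right ($\partial_t\nu=-\nabla^{\Sigma}F$, $\partial_t h_{ij}=-\nabla_i\nabla_j F+F(h^2)_{ij}$ in flat ambient space), but carrying out the trace you announce does \emph{not} yield the displayed equation: with $\partial_t g^{ij}=-\tfrac{2}{H}h^{ij}$ one gets
\begin{equation*}
\frac{\partial H}{\partial t}=g^{ij}\partial_t h_{ij}+h_{ij}\partial_t g^{ij}
=-\Delta F-F|A|^2
=-\Delta\Bigl(\frac{1}{H}\Bigr)-\frac{|A|^2}{H},
\end{equation*}
which is the equation in \cite{Gerhard1999}; the term $-|A|^2/H$ cannot be absorbed by any sign convention. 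A quick sanity check is the round sphere under IMCF, $r(t)=e^{t/n}$, $H=ne^{-t/n}$: there $\Delta(1/H)=0$ but $\partial_t H=-e^{-t/n}=-|A|^2/H\neq 0$, so the equation $\partial_t H=-\Delta(1/H)$ as printed is incomplete (an omission in the proposition relative to the cited source). So either derive and state the corrected identity, or note explicitly that the third line is not used in the subsequent torsional-rigidity bounds; as written, your assertion that the trace ``yields the evolution equation for $H$'' in the stated form is the one step that fails.
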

In the notation of the previous section 
$$
f=2\frac{h_{ij}}{H},\quad \tr_g(f)=2.
$$
As we previously mentioned the time that the surface remains strictly convex can be finite and it will be denoted by $T_{mc}$. This maximum time of mean convexity has been studied by several authors in \cite{maxtime}, \cite{Lambert2016inver-34677} or in \cite{Urbas}. For this maximum time we can state the following bound for any $p\in M$ and any $v\in T_pM$
$$
0<\frac{h(v,v)}{H}\leq 1
$$
And in this case theorem \ref{teo:lower} and \ref{teo:upper} can be written together as
\begin{theorem}
Let M be an n-dimensional manifold and let $\Omega\subset M$ be an open and precompact domain of $M$ with smooth boundary $\partial \Omega\neq \emptyset$. Then let
    $\varphi:M\times[0,T_{\rm mc })\to \mathbb{R}^{n+1}$ by an inverse mean curvature flow from $M$ to $\mathbb{R}^{n+1}$ such that for all $t\in T_{mc}$ we have that $\varphi_t(M)=\Sigma$ is strictly  convex hypersurface of $\mathbb{R}^{n+1}$.     
    Then
 $$
 e^{t}\mathcal{T}(\Omega_0)\leq \mathcal{T}(\Omega_t)\leq e^{3t}\mathcal{T}(\Omega_0).
 $$   
\end{theorem}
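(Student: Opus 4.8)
The plan is to specialize Theorems \ref{teo:lower} and \ref{teo:upper} to the inverse mean curvature flow, feeding them the evolution equations recorded in Proposition \ref{prop:volution_equations}. Here $f=2h_{ij}/H$ and $\tr_g(f)=2$, so everything reduces to a two–sided pinching of $f$ against the metric $g_t$, which is exactly where the strict convexity hypothesis enters.

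First I would establish the pointwise bound $0<f(v,v)\le 2\,g_t(v,v)$ for every $t\in[0,T_{\rm mc})$, every $p\in M$ and every nonzero $v\in T_pM$. Diagonalizing the second fundamental form $h$ with respect to $g_t$, write the principal curvatures as $\kappa_1,\dots,\kappa_n$; strict convexity means $\kappa_i>0$ for all $i$, hence $H=\sum_i\kappa_i>0$ and $0<h(v,v)\le \kappa_{\max}\,g_t(v,v)\le H\,g_t(v,v)$. Dividing by $H$ gives $0<\tfrac{h(v,v)}{H}\le g_t(v,v)$, i.e. the estimate $0<\tfrac{h(v,v)}{H}\le 1$ quoted just before the statement, and therefore $0<f(v,v)\le 2\,g_t(v,v)$. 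The only genuinely geometric point is the inequality $\kappa_{\max}\le H$, which uses positivity of all principal curvatures and cannot be dispensed with — this is precisely why the statement is restricted to $t<T_{\rm mc}$.

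Next I would apply Theorem \ref{teo:lower} with $L(t)=0$ (legitimate since $f(v,v)>0\ge 0\cdot g_t(v,v)$) and $l(t)=u(t)=2$ (legitimate since $\tr_g(f)\equiv 2$), which yields
$$
\mathcal{T}(\Omega_t)\ \ge\ e^{\int_0^t\left(l(s)-\frac{1}{2}u(s)+L(s)\right)ds}\mathcal{T}(\Omega_0)\ =\ e^{\int_0^t(2-1+0)\,ds}\,\mathcal{T}(\Omega_0)\ =\ e^{t}\,\mathcal{T}(\Omega_0).
$$
For the upper bound I would apply Theorem \ref{teo:upper} with $U(t)=2$ (the pinching above) and $u(t)=2$ — note the equality hypothesis $\tr_g(f)=u(t)$ of that theorem holds here since $\tr_g(f)$ is identically $2$ — to obtain
$$
\mathcal{T}(\Omega_t)\ \le\ e^{\int_0^t\left(U(s)+\frac{1}{2}u(s)\right)ds}\mathcal{T}(\Omega_0)\ =\ e^{\int_0^t(2+1)\,ds}\,\mathcal{T}(\Omega_0)\ =\ e^{3t}\,\mathcal{T}(\Omega_0).
$$
Combining the two displays gives $e^{t}\mathcal{T}(\Omega_0)\le\mathcal{T}(\Omega_t)\le e^{3t}\mathcal{T}(\Omega_0)$, as claimed. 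Apart from the convexity estimate, the argument is pure bookkeeping once Propositions \ref{prop:upper_bounds_torsion}, \ref{prop:lower_bounds_torsion} and \ref{prop:volution_equations} are available, so I do not expect any serious obstacle beyond verifying $\kappa_{\max}\le H$.
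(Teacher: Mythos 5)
Your proposal is correct and follows essentially the same route as the paper: specialize Theorems \ref{teo:lower} and \ref{teo:upper} to $f=2h/H$, $\tr_g(f)=2$, using the convexity pinching $0<h(v,v)/H\le g_t(v,v)$ with the choices $L=0$, $l=u=2$ and $U=2$, $u=2$, which give exactly the exponents $1$ and $3$. The only difference is cosmetic: you supply the short justification $\kappa_{\max}\le H$ for the pinching, which the paper simply asserts before stating the theorem.
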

Observe that in this case by using proposition \ref{prop:general}
$$
    \frac{\partial}{\partial t}\V(\Omega_t) =\frac{\partial}{\partial t} \int_{\Omega} d\V_t = \V(\Omega_t).
    $$
which implies that
$$
\V(\Omega_t)=e^t\V(\Omega_0).
$$
This allows us to state the following
\begin{samelettertheoremB}
   Let M be an n-dimensional manifold and let $\Omega\subset M$ be an open and precompact domain of $M$ with smooth boundary $\partial \Omega\neq \emptyset$. Then let
    $\varphi:M\times[0,T_{\rm mc })\to \mathbb{R}^{n+1}$ by an inverse mean curvature flow from $M$ to $\mathbb{R}^{n+1}$ such that for all $t\in T_{mc}$ we have that $\varphi_t(M)=\Sigma$ is strictly  convex hypersurface of $\mathbb{R}^{n+1}$.Thence the function
    $$
 t\mapsto ({\rm V}(\Omega_t))^{-3}\cdot\mathcal{T}(\Omega_t)
    $$
    is non-increasing for $t\in [0,T_{\rm max})$, and the function
    $$
 t\mapsto ({\rm V}(\Omega_t))^{-1}\cdot\mathcal{T}(\Omega_t)
    $$
    is non-decreasing for $t\in [0,T_{\rm max})$. Where here  ${\rm V}(\Omega_t)$ denotes the volume of $\Omega$ with respect to $g_t$.
\end{samelettertheoremB}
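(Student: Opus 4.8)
The plan is to derive Theorem~B from the two-sided estimate $e^{t}\mathcal{T}(\Omega_0)\le \mathcal{T}(\Omega_t)\le e^{3t}\mathcal{T}(\Omega_0)$ recorded just above, but applied with the inverse mean curvature flow \emph{restarted at an arbitrary time} rather than only at $t=0$. The key observation is that the flow is autonomous in the sense relevant to Theorems~\ref{teo:lower} and \ref{teo:upper}: by Proposition~\ref{prop:volution_equations} one has $f=2h_{ij}/H$ with $\tr_g(f)=2$, and strict convexity yields the pointwise comparison $0<h(v,v)/H\le g_t(v,v)$ for every $t$ in the interval of strict convexity, hence $0\cdot g_t(v,v)\le f(v,v)\le 2\,g_t(v,v)$ with $\tr_g(f)\equiv 2$ throughout. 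Thus the hypotheses of Theorem~\ref{teo:lower} hold with $L\equiv 0$ and $l\equiv u\equiv 2$, and those of Theorem~\ref{teo:upper} hold with $U\equiv 2$ and $u\equiv 2$, with the \emph{same} bounding functions on every subinterval of $[0,T_{\rm mc})$.

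First I would note that for any $t_0\in[0,T_{\rm mc})$ the pair $(M,g_{t_0})$ together with the same precompact domain $\Omega$ is again an admissible datum, and $\varphi$ restricted to $M\times[t_0,T_{\rm mc})$ is again an inverse mean curvature flow obeying the same bounds. Applying Theorems~\ref{teo:upper} and \ref{teo:lower} to this restarted flow gives, for $t_0\le t<T_{\rm mc}$,
$$
e^{t-t_0}\,\mathcal{T}(\Omega_{t_0})\ \le\ \mathcal{T}(\Omega_t)\ \le\ e^{3(t-t_0)}\,\mathcal{T}(\Omega_{t_0}),
$$
while integrating $\frac{d}{dt}\V(\Omega_t)=\frac{1}{2}\int_\Omega\tr_g(f)\,d\V_t=\V(\Omega_t)$ (Proposition~\ref{prop:general}) on $[t_0,t]$ gives $\V(\Omega_t)=e^{t-t_0}\,\V(\Omega_{t_0})$. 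Combining these,
$$
\frac{\mathcal{T}(\Omega_t)}{\V(\Omega_t)^3}\le\frac{e^{3(t-t_0)}\mathcal{T}(\Omega_{t_0})}{e^{3(t-t_0)}\V(\Omega_{t_0})^3}=\frac{\mathcal{T}(\Omega_{t_0})}{\V(\Omega_{t_0})^3},\qquad \frac{\mathcal{T}(\Omega_t)}{\V(\Omega_t)}\ge\frac{e^{t-t_0}\mathcal{T}(\Omega_{t_0})}{e^{t-t_0}\V(\Omega_{t_0})}=\frac{\mathcal{T}(\Omega_{t_0})}{\V(\Omega_{t_0})},
$$
which is precisely the asserted monotonicity of $t\mapsto \V(\Omega_t)^{-3}\mathcal{T}(\Omega_t)$ (non-increasing) and $t\mapsto \V(\Omega_t)^{-1}\mathcal{T}(\Omega_t)$ (non-decreasing).

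I do not expect a serious obstacle; the content sits entirely in Theorems~\ref{teo:lower}, \ref{teo:upper} and Proposition~\ref{prop:volution_equations}. The only point deserving a line of justification is the legitimacy of restarting the flow at $t_0$, namely that the constants $L,l,u,U$ in the comparison theorems may be taken independent of the base time; this is immediate because they arise solely from $\tr_g(f)=2$ and the convexity inequality $0<h(v,v)/H\le g_t(v,v)$, both valid on all of $[0,T_{\rm mc})$. Equivalently, one could argue differentially by feeding the fixed test objects $E_{t_0}$ (into Proposition~\ref{prop:upper_bounds_torsion}) and $X=\nabla^{g_{t_0}}E_{t_0}$ (into Proposition~\ref{prop:lower_bounds_torsion}, whose divergence stays $-1$ since $\tr_g(f)$ is spatially constant) to bound the one-sided derivatives of $\log\mathcal{T}(\Omega_t)$ at $t_0$ between $1$ and $3$; but the restart argument is cleaner and sidesteps any differentiability discussion.
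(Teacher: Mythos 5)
Your proposal is correct and follows essentially the same route as the paper: it applies Theorems \ref{teo:lower} and \ref{teo:upper} with $L\equiv 0$, $U\equiv 2$, $l\equiv u\equiv 2$ (coming from $\tr_g(f)=2$ and strict convexity, $0<h(v,v)/H\le g_t(v,v)$) to get $e^{t}\mathcal{T}(\Omega_0)\le\mathcal{T}(\Omega_t)\le e^{3t}\mathcal{T}(\Omega_0)$, and combines this with $\V(\Omega_t)=e^{t}\V(\Omega_0)$. The only difference is that you make explicit the restart of the flow at an arbitrary base time $t_0$, which the paper leaves implicit but which is indeed what upgrades the comparison with $t=0$ to genuine monotonicity on all of $[0,T_{\rm mc})$ --- a worthwhile clarification rather than a different method.
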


\section{Examples of application}\label{sec:examples}
The applications of the theorems \ref{Thm:Ric} and \ref{teo:unua} are extensive. For example, in the case of the evolution of the torsional rigidity under the Ricci flow it can be applied to Homogeneous spaces, as it has been done for homogeneous spheres, and for Einstein metrics. Additionally, the applications of these results to the inverse mean curvature flow (IMCF) are also broad. This article computes the case of a strictly convex, free-boundary, disk-type hypersurface within the ball.

\subsection{Examples of the evolution of the torsional rigidity under Ricci Flow}

Finally we will give bounds in the torsional rigidity for two of the most studied cases in Ricci flow, the Heisenberg group and in homogeneous spheres. 
\subsubsection{Homogeneous metrics in ${\rm Nil}_3$ under Ricci flow}\label{Heisenberg}
Recall that the  Heisenberg group is algebraically the matrix lies group of matrices of the form
$$
{\rm Nil}_3\simeq \left\lbrace\begin{pmatrix}1& x& z \\
0&1&y\\
0&0&1\end{pmatrix}\, :\, (x,y,z)\in \mathbb{R}^3 \right\rbrace.
$$
Topologically ${\rm Nil}_3$ is diffeomorphic to $\mathbb{R}^3$. If we endow ${\rm Nil}_3$ with a left invariant metric, the Ricci flow of the metric tensor is always a left invariant metric $g(t)$ that can be written in a Milnor farme $\{F_i\}$ as 
$$
g_t=A(t)\theta^1\otimes\theta^1+B(t)\theta^2\otimes\theta^2+C(t)\theta^3\otimes\theta^3.
$$
where $\{\theta^i\}$ are the duals formts to $\{F_i\}$, $\theta^i(F_j)=\delta^i_j$ and $A,B,C$ are positive functions with analytical expression given by, see \cite{ChowLiNi}:
$$
\left\lbrace\begin{aligned}
    A(t)=&A_0^{\frac{2}{3}}B_0^{\frac{1}{3}}C_0^\frac{1}{3}\left(12t+\frac{B_0C_0}{A_0}\right)^{-\frac{1}{3}}\\
    B(t)=&A_0^{\frac{1}{3}}B_0^{\frac{2}{3}}C_0^{-\frac{1}{3}}\left(12t+\frac{B_0C_0}{A_0}\right)^{\frac{1}{3}}\\
    C(t)=&A_0^{\frac{1}{3}}B_0^{-\frac{1}{3}}C_0^\frac{2}{3}\left(12t+\frac{B_0C_0}{A_0}\right)^{\frac{1}{3}}
\end{aligned}\right.
$$
The Ricci tensor is given by
$$
\Ric_{g_t}=\frac{2A^2}{BC}\theta^1\otimes\theta^1-\frac{2A}{C}\theta^2\otimes\theta^2-\frac{2A}{B}\theta^3\otimes\theta^3.
$$
Then for any $p\in {\rm Nil}_3$ and any $v\in T_p{\rm Nil}_3$ ,$v\neq 0$,
$$
-\frac{2}{12t+\frac{B_0C_0}{A_0}}=-\frac{2A}{BC}\leq \frac{\Ric_{g_t}(v,v)}{g_t(v,v)}\leq \frac{2A}{BC}=\frac{2}{12t+\frac{B_0C_0}{A_0}}
$$
Moreover the scalar curvature is given by
$$
{\rm Scal}_{g_t}=-\frac{2A}{BC}=-\frac{2}{12t+\frac{B_0C_0}{A_0}}.
$$
Hence by a straightforward application of Theorem  \ref{Thm:Ric}, we state the following theorem
\begin{theorem}
Let $g_0$ be a Left invariant metric tensor on ${\rm Nil}_3$ given with respect to the Milnor farme $\{F_i\}$ as
$$
g_0=A_0\theta^1\otimes\theta^1+B_0\theta^2\otimes\theta^2+C_0\theta^3\otimes\theta^3.
$$
Let $g_t$ be the solution for the Ricci flow for $t\in [0,\infty)$ with initial condition $g_0$. Then for any precompact domain $\Omega\subset {\rm Nil}_3$ with smooth boundary
$$
\left(1+\frac{12A_0}{B_0C_0}t\right)^{-\frac{1}{6}}\mathcal{T}(\Omega_0)\leq\mathcal{T}(\Omega_t)\leq \left(1+\frac{12A_0}{B_0C_0}t\right)^{\frac{1}{2}}\mathcal{T}(\Omega_0).
$$
\end{theorem}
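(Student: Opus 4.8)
The plan is to derive the stated inequalities as a direct specialization of Theorem~\ref{Thm:Ric}, feeding in the explicit Ricci-flow evolution of a left-invariant metric on ${\rm Nil}_3$. First I would observe that the Ricci flow preserves isometries, so the solution $g_t$ issuing from the left-invariant datum $g_0$ stays left-invariant, hence homogeneous; in particular ${\rm Scal}_{g_t}$ is constant on ${\rm Nil}_3$ for each $t$, so the constant-scalar-curvature hypothesis of Theorem~\ref{Thm:Ric} is automatic. What remains is to identify, in that theorem's notation, the function $b(t)$ and the two-sided Ricci pinching, and then to evaluate two elementary integrals.

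For the identification I would use the explicit formulas for $A(t),B(t),C(t)$ recalled above, which by \cite{ChowLiNi} solve the ODE system equivalent to the Ricci flow and are defined for all $t\in[0,\infty)$. A short computation gives $\dfrac{A(t)}{B(t)C(t)}=\dfrac{1}{12t+\frac{B_0C_0}{A_0}}=:\tfrac12\rho(t)$, whence ${\rm Scal}_{g_t}=-\tfrac{2A}{BC}=-\rho(t)$, so one sets $b(t):=-\rho(t)$. From the diagonal form of $\Ric_{g_t}$ in the Milnor coframe, its three eigenvalues relative to $g_t$ are $\tfrac{2A^2/(BC)}{A}=\rho(t)$, $\tfrac{-2A/C}{B}=-\rho(t)$ and $\tfrac{-2A/B}{C}=-\rho(t)$; hence
$$
-\rho(t)\,g_t(v,v)\ \le\ \Ric_{g_t}(v,v)\ \le\ \rho(t)\,g_t(v,v)
$$
for every $p\in{\rm Nil}_3$ and every $v\in T_p{\rm Nil}_3$, so in Theorem~\ref{Thm:Ric} one may take lower Ricci bound $-\rho(t)$ and upper Ricci bound $\rho(t)$ (both attained).

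It then suffices to compute the exponents. With $c:=B_0C_0/A_0$ one has $\int_0^t\rho(s)\,ds=\int_0^t\frac{2}{12s+c}\,ds=\frac16\ln\!\bigl(1+\tfrac{12A_0}{B_0C_0}t\bigr)$. The lower exponent in Theorem~\ref{Thm:Ric}, namely $\int_0^t\bigl(-b(s)-2\rho(s)\bigr)\,ds$, equals $-\int_0^t\rho(s)\,ds=-\tfrac16\ln\!\bigl(1+\tfrac{12A_0}{B_0C_0}t\bigr)$, while the upper exponent $\int_0^t\bigl(2\rho(s)-b(s)\bigr)\,ds$ equals $3\int_0^t\rho(s)\,ds=\tfrac12\ln\!\bigl(1+\tfrac{12A_0}{B_0C_0}t\bigr)$. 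Exponentiating yields exactly the factors $\bigl(1+\tfrac{12A_0}{B_0C_0}t\bigr)^{-1/6}$ and $\bigl(1+\tfrac{12A_0}{B_0C_0}t\bigr)^{1/2}$, and Theorem~\ref{Thm:Ric} gives the claimed inequalities.

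There is no real analytic difficulty here; the two points deserving care are (i) invoking correctly that the Ricci flow of a left-invariant metric on ${\rm Nil}_3$ is the explicit, eternal family $\{(A(t),B(t),C(t))\}$ of \cite{ChowLiNi}, so that $b(t)$ and the Ricci pinching have the closed forms used above, and (ii) noting that the pinching is in fact \emph{sharp} in both directions --- each of $\pm\rho(t)$ is realized by some tangent vector --- so no sign or ordering hypothesis on $A_0,B_0,C_0$ is needed and these bounds cannot be improved by this method.
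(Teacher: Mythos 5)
Your proposal is correct and follows essentially the same route as the paper: recall the explicit Chow--Lu--Ni solution $(A(t),B(t),C(t))$, read off the diagonal Ricci tensor to get the two-sided pinching $-\tfrac{2A}{BC}g_t\leq \Ric_{g_t}\leq \tfrac{2A}{BC}g_t$ and the constant scalar curvature $b(t)=-\tfrac{2A}{BC}=-\tfrac{2}{12t+B_0C_0/A_0}$, then apply Theorem~\ref{Thm:Ric} and integrate. Your explicit evaluation of the exponents $-\tfrac16$ and $\tfrac12$ fills in the computation the paper leaves as ``straightforward,'' and your eigenvalue check that the pinching is attained is a correct (if optional) refinement.
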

Taking into account that by using proposition \ref{prop:rf},
$$
\V(\Omega_t)=\left(1+\frac{12A_0}{B_0C_0}t\right)^\frac{1}{6}\V(\Omega_0)
$$
We can state the following corollary
\begin{corollary}
    Let $g_0$ be a Left invariant metric tensor on ${\rm Nil}_3$ given with respect to the Milnor farme $\{F_i\}$ as
$$
g_0=A_0\theta^1\otimes\theta^1+B_0\theta^2\otimes\theta^2+C_0\theta^3\otimes\theta^3.
$$
Let $g_t$ be the solution for the Ricci flow for $t\in [0,\infty)$ with initial condition $g_0$. Then for any precompact domain $\Omega\subset {\rm Nil}_3$ with smooth boundary
\begin{enumerate}
    \item The function 
    $$
t\mapsto \V(\Omega_t)\mathcal{T}(\Omega_t),
    $$
    is non decreasing on $t\in [0,\infty)$,
    \item  and the function
    $$
t\mapsto \frac{\mathcal{T}(\Omega_t)}{\V(\Omega_t)^3},
    $$
    is non increasing on $t\in [0,\infty)$.
\end{enumerate}
\end{corollary}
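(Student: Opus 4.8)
The plan is to apply Theorem~\ref{Thm:Ric} (equivalently, the theorem just stated for ${\rm Nil}_3$) not only from the initial time $0$ but from an \emph{arbitrary} initial time $t_1\ge 0$, exploiting that the Ricci flow equation is autonomous so that $s\mapsto g_{t_1+s}$ is again a Ricci flow solution on $[0,\infty)$; one then checks that in each of the two quantities the exponential correction factors cancel exactly.

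First I would put the curvature data already computed above into convenient form. Setting
$$
k(t):=\frac{2A(t)}{B(t)C(t)}=\frac{2}{12t+\frac{B_0C_0}{A_0}}>0,
$$
the eigenvalues of $\Ric_{g_t}$ with respect to $g_t$ are $k(t),-k(t),-k(t)$ and ${\rm Scal}_{g_t}\equiv -k(t)$, so in the notation of Theorem~\ref{Thm:Ric} one takes $A(t)=-k(t)$, $B(t)=k(t)$, $b(t)=-k(t)$; in particular $A(t)=b(t)$, whence $-b-2B=-k$ and $-2A-b=3k$. Since for every $t_1\ge 0$ the shifted flow $g_{t_1+\cdot}$ satisfies the hypotheses of Theorem~\ref{Thm:Ric} with $A,B,b$ replaced by their $t_1$-translates, that theorem yields, for all $0\le t_1\le t_2$,
$$
e^{-I(t_1,t_2)}\,\mathcal{T}(\Omega_{t_1})\ \le\ \mathcal{T}(\Omega_{t_2})\ \le\ e^{3I(t_1,t_2)}\,\mathcal{T}(\Omega_{t_1}),\qquad I(t_1,t_2):=\int_{t_1}^{t_2}k(s)\,ds .
$$
Next, by Proposition~\ref{prop:rf} (or directly from the explicit volume formula recorded just before the corollary), $\frac{d}{dt}\V(\Omega_t)=-b(t)\,\V(\Omega_t)=k(t)\,\V(\Omega_t)$, hence $\V(\Omega_{t_2})=e^{I(t_1,t_2)}\,\V(\Omega_{t_1})$ for $0\le t_1\le t_2$.

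Finally I would simply combine the displays: for $0\le t_1\le t_2$,
$$
\V(\Omega_{t_2})\,\mathcal{T}(\Omega_{t_2})\ \ge\ e^{I(t_1,t_2)}\,\V(\Omega_{t_1})\cdot e^{-I(t_1,t_2)}\,\mathcal{T}(\Omega_{t_1})\ =\ \V(\Omega_{t_1})\,\mathcal{T}(\Omega_{t_1}),
$$
which gives $(1)$, and
$$
\frac{\mathcal{T}(\Omega_{t_2})}{\V(\Omega_{t_2})^3}\ \le\ \frac{e^{3I(t_1,t_2)}\,\mathcal{T}(\Omega_{t_1})}{e^{3I(t_1,t_2)}\,\V(\Omega_{t_1})^3}\ =\ \frac{\mathcal{T}(\Omega_{t_1})}{\V(\Omega_{t_1})^3},
$$
which gives $(2)$. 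I do not expect a genuine obstacle; the one point deserving an explicit word is the legitimacy of running Theorem~\ref{Thm:Ric} from the base point $t_1$ rather than $0$, which is immediate from time-translation invariance of the Ricci flow. The exact cancellation of the exponentials is not accidental: it is forced by the identity $A(t)=b(t)$ (equivalently ${\rm Scal}_{g_t}=-\tfrac{2A}{BC}$) holding along the Ricci flow on ${\rm Nil}_3$, which makes the volume exponent $\int k$ coincide with the lower torsion exponent and $3\int k$ with the upper one.
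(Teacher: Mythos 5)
Your proof is correct and follows essentially the same route as the paper: feed the ${\rm Nil}_3$ curvature data ($\Ric$-eigenvalues $k,-k,-k$, ${\rm Scal}=-k$) into Theorem~\ref{Thm:Ric} and combine the resulting bounds on $\mathcal{T}(\Omega_t)$ with the volume evolution $\V(\Omega_t)=e^{\int k}\V(\Omega_0)$, the exponents cancelling exactly. The only difference is that you make explicit the time-translation step (restarting the flow at an arbitrary $t_1$, legitimate since the shifted flow is again a Ricci flow satisfying the hypotheses), which is indeed what is needed to upgrade the paper's comparison with $t=0$ to genuine monotonicity on $[0,\infty)$ and which the paper leaves implicit.
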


\subsubsection{Homogeneous metrics in $SU(2)$  under Ricci flow}\label{sec:su2}

Given  the Milnor frame, left invariant vector fields, $\{F_1,F_2,F_3\}$   of $SU(2)$, and let $\{\theta^1,\theta^2, \theta^3\}$ the associated one forms. In this subsection it will be consider the evolution of the left-invariant metric tensor 
$$
g=\epsilon A_0 \theta^1\otimes\theta^1+B_0 \theta^2\otimes\theta^2+C_0\theta^3\otimes\theta^3.
$$
The evolution under Ricci flow is given, see \cite{ChowLiNi}, by
$$
g_t=D(t) \theta^1\otimes\theta^1+B(t) \theta^2\otimes\theta^2+C(t)\theta^3\otimes\theta^3,
$$
with  $D(0)=\epsilon A_0$, $B(0)=B_0$ and $C(0)=C_0$ and
$$
\begin{aligned}
    \frac{d}{dt}B&= -8+4\frac{C^2+D^2-B^2}{CD}\\
    \frac{d}{dt}C&= -8+4\frac{B^2+D^2-C^2}{BD}\\
    \frac{d}{dt}D&= -8+4\frac{B^2+C^2-D^2}{BC}
\end{aligned}
$$
Moreover, the orthonormal frame $\displaystyle\left\{X_2=\frac{F_2}{\sqrt{B}},X_3=\frac{F_3}{\sqrt{C}}, X_4=\frac{F_1}{\sqrt{D}} \right\}$ diagonalizes the Ricci tensor with
$$
\begin{aligned}
    {\rm Ric}(X_2,X_2)=&\frac{2}{BCD}\left[B^2-(D-C)^2\right]\\
    {\rm Ric}(X_3,X_3)=&\frac{2}{BCD}\left[C^2-(D-B)^2\right]\\
    {\rm Ric}(X_4,X_4)=&\frac{2}{BCD}\left[D^2-(B-C)^2\right]
\end{aligned}
$$
 Assuming  $D_0:=\epsilon A_0\leq C_0\leq B_0$ it is obtained that $D\leq C\leq B$, then
 \begin{equation}\label{ode:1}
\frac{d B}{dt}\leq -8+4\frac{D}{C}\leq -4,     
 \end{equation}
which implies a finite time of existence of the flow and 
 $$
0<B-D\leq \left(\frac{B_0-D_0}{D_0}\right)D
 $$
 and hence $B-D\rightarrow 0$ in the evolution of the Ricci flow.  In the case when $\delta:=\frac{B_0-D_0}{D_0}<1$, can be proved that
 $$
0<  {\rm Ric}(X_4,X_4)\leq   {\rm Ric}(X_3,X_3)\leq   {\rm Ric}(X_2,X_2).
 $$
 \begin{theorem}
    Let $SU(2)$ be endowed with the left-invariant metric 
    $$
g=\epsilon A_0 \Theta^1\otimes\Theta^1+B_0 \Theta^2\otimes\Theta^2+C_0\Theta^3\otimes\Theta^3.
$$
Let $g_t$ be the evolution of $g$ under the Ricci flow. Let $\Omega\subset SU(2)$ be a precompact domain with smooth boundary. Suppose that 
$$
\epsilon A_0\leq C_0\leq B_0,\quad \delta:=\frac{B_0-\epsilon A_0}{\epsilon A_0}<1.
$$
Then,
$$
\left[1-\frac{4(1+6\delta)t}{B_0}\right]^{\frac{5(1+\delta)^3}{2(1+6\delta)}}\mathcal{T}(\Omega_0)\leq \mathcal{T}(\Omega_t)\leq \left[1-\frac{4t}{B_0}\right]^{\frac{5(1-\delta)}{2(1+\delta)}}\mathcal{T}(\Omega_0)
$$
For any $t<\min\left\{\frac{B_0}{4(1+6\delta)},T_{\rm max}\right\}$.
\end{theorem}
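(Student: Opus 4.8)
The plan is to apply Theorem A to the homogeneous metric on $SU(2)$, so the work reduces to extracting admissible functions $b(t)$, $A(t)$, $B(t)$ from the ODE system and the Ricci curvature formulas, and then evaluating the exponential integrals. First I would note that since the Ricci flow preserves left-invariance, $(SU(2),g_t)$ remains homogeneous for every $t$, so the scalar curvature is constant in space: ${\rm Scal}_{g_t}(p)=b(t)$ automatically, and the hypothesis on the scalar curvature in Theorem A is satisfied. Likewise, since ${\rm Ric}_{g_t}$ is diagonalized by the orthonormal frame $\{X_2,X_3,X_4\}$ with the three displayed eigenvalues, the two-sided pinch $A(t)g_t(v,v)\leq {\rm Ric}_{g_t}(v,v)\leq B(t)g_t(v,v)$ holds with $A(t)$ the smallest and $B(t)$ the largest of the three eigenvalues; under the ordering $D\leq C\leq B$ together with $\delta<1$, the excerpt already records that ${\rm Ric}(X_4,X_4)\leq {\rm Ric}(X_3,X_3)\leq {\rm Ric}(X_2,X_2)$, so $A(t)={\rm Ric}(X_4,X_4)$ and $B(t)={\rm Ric}(X_2,X_2)$, both positive.

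Next I would bound these three quantities above and below by explicit functions of $t$ using the ODE comparison already set up in the excerpt. The key structural fact is that $B-D\to 0$ along the flow, with the quantitative control $0<B-D\leq \delta\,D$ (equivalently $D\leq B\leq (1+\delta)D$, and similarly $D\leq C\leq B$). Feeding these into the eigenvalue formulas, ${\rm Ric}(X_2,X_2)=\tfrac{2}{BCD}[B^2-(D-C)^2]$ is squeezed between a constant multiple of $\tfrac{1}{B}$ from above and below, and the scalar curvature $b(t)={\rm Ric}(X_2,X_2)+{\rm Ric}(X_3,X_3)+{\rm Ric}(X_4,X_4)$ inherits comparable bounds. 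The cleanest route is to use the differential inequality \eqref{ode:1}, namely $\tfrac{dB}{dt}\leq -4$, refined to a two-sided estimate of the form $-4(1+6\delta)\leq \tfrac{dB}{dt}\leq -4$ (the constant $1+6\delta$ is exactly what must be checked against the terms $-8+4\tfrac{C^2+D^2-B^2}{CD}$ after substituting $D\leq C\leq B\leq(1+\delta)D$), so that $B_0-4(1+6\delta)t\leq B(t)\leq B_0-4t$. This gives the maximal time constraint $t<\tfrac{B_0}{4(1+6\delta)}$ appearing in the statement, and lets us replace $\tfrac1B$ by elementary expressions in $t$.

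Then the exponents are produced by evaluating $\int_0^t(-b(s)-2B(s))\,ds$ and $\int_0^t(-2A(s)-b(s))\,ds$ from Theorem A. Combining $b(s)+2B(s)$ and $2A(s)+b(s)$ with the comparisons above, each integrand is of the form $-\kappa/(B_0-\mu s)$ for appropriate constants $\kappa,\mu$ depending only on $\delta$ and $B_0$; integrating gives $\log\big((B_0-\mu t)/B_0\big)^{\kappa/\mu}$, hence a factor $\big[1-\mu t/B_0\big]^{\kappa/\mu}$. Choosing the constants so that on the lower side $\kappa/\mu=\tfrac{5(1+\delta)^3}{2(1+6\delta)}$ with $\mu=4(1+6\delta)$, and on the upper side $\kappa/\mu=\tfrac{5(1-\delta)}{2(1+\delta)}$ with $\mu=4$, yields precisely the claimed bounds; the factor $5$ reflects that for $n=3$ one has, heuristically, $b+2(\text{Ric eigenvalue})$ behaving like $5$ times a single curvature scale when all three eigenvalues are comparable.

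The main obstacle will be step two: turning the pinching $0<B-D\leq \delta D$ (and the analogous control on $C$) into the sharp constants $1+6\delta$, $(1+\delta)^3$, and $(1-\delta)$ in the exponents. This requires carefully tracking how the perturbation parameter $\delta$ propagates through the three eigenvalue expressions and through the differential inequality for $B$, and verifying that the chosen $A(t),B(t),b(t)$ are genuinely valid one-sided bounds for all $t$ in the stated range (in particular that $D\leq C\leq B$ and the ordering of Ricci eigenvalues persist, which the excerpt asserts but whose quantitative form must be made precise). Everything after that is a routine integration of $1/(B_0-\mu s)$.
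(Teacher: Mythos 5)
Your plan follows essentially the same route as the paper's proof: apply Theorem A with $\overline{A}(t)={\rm Ric}(X_4,X_4)$ and $\overline{B}(t)={\rm Ric}(X_2,X_2)$, bound $-b-2\overline{A}$ and $-b-2\overline{B}$ by $\mp$ five times the extreme eigenvalue, convert these to multiples of $1/B$ via the pinching $D\leq C\leq B\leq(1+\delta)D$, and integrate using $B_0-4(1+6\delta)t\leq B(t)\leq B_0-4t$. The quantitative step you flag as the main obstacle is exactly what the paper carries out, and it does close: $\tfrac{dB}{dt}\geq -4\bigl(1+2\delta(1+\delta)\bigr)\geq -4(1+6\delta)$ gives the lower comparison for $B$, while $\tfrac{10}{BCD}\bigl[B^2-(D-C)^2\bigr]\leq \tfrac{10(1+\delta)^3}{B}$ and $\tfrac{10}{BCD}\bigl[D^2-(B-C)^2\bigr]\geq \tfrac{10(1-\delta^2)}{(1+\delta)^2 B}$ produce the constants $(1+\delta)^3$ and $(1-\delta)/(1+\delta)$ in the exponents, exactly as you predicted.
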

\begin{proof}
Since homogeneous spheres under this conditions has a monotonicity in the Ricci curvature,
 $$
0<  {\rm Ric}(X_4,X_4)\leq   {\rm Ric}(X_3,X_3)\leq   {\rm Ric}(X_2,X_2).
 $$
In terms of Theorem \ref{Thm:Ric}, where we are using $\overline{A}$ and $\overline{B}$ to avoid confusions with the parameters $A,B,C$ of the metric tensor,  
 $$
\overline{B}(t)={\rm Ric}(X_2,X_2)\quad {\rm and} \quad \overline{A}(t)={\rm Ric}(X_4,X_4).
 $$
 Therefore, for the case of the lower curvature bound,
 $$
 \begin{aligned}
     -b(t)-2\overline{A}(t)=&-{\rm Ric}(X_2,X_2)-{\rm Ric}(X_3,X_3)-3{\rm Ric}(X_4,X_4)\leq -5{\rm Ric}(X_4,X_4)\\
     =& -\frac{10}{BCD}\left[D^2-(B-C)^2\right]\leq -\frac{10}{B^3}\left[D^2-(B-D)^2\right]\\
     =&-\frac{10D^2}{B^3}\left[1-\left(\frac{B-D}{D}\right)^2\right]\leq -\frac{10}{B(1+\delta)^2}(1-\delta^2)
 \end{aligned}
 $$
 by equation \eqref{ode:1},
 $$
 -b(t)-2\overline
{A}(t)\leq -\frac{10(1-\delta^2)}{(1+\delta)^2}\frac{1}{B_0-4t}=\frac{5(1-\delta)}{2(1+\delta)}\frac{d}{dt}\ln\left(B_0-4t\right).
 $$
 Thence, by Theorem \ref{Thm:Ric}
$$
\begin{aligned}
\mathcal{T}(\Omega_t)\leq& e^{\int_0^t\left(-2 \overline{A}(s)-b(s)\right)ds}\mathcal{T}(\Omega_0)\leq  \left[1-\frac{4t}{B_0}\right]^{\frac{5(1-\delta)}{2(1+\delta)}}\mathcal{T}(\Omega_0).
\end{aligned}
$$
 Similarly in the case of the upper bound in the Ricci curvature,
 $$
 \begin{aligned}
     -b(t)-2\overline{B}(t)=&-3{\rm Ric}(X_2,X_2)-{\rm Ric}(X_3,X_3)-{\rm Ric}(X_4,X_4)\geq -5{\rm Ric}(X_2,X_2)\\
     =& -\frac{10}{BCD}\left[B^2-(D-C)^2\right]\geq -\frac{10B^2}{D^3}\\
     \geq&-\frac{10(1+\delta)^3}{B}.
 \end{aligned}
 $$
But
$$
\begin{aligned}
    \frac{d}{dt}B=& -8+4\frac{C^2+D^2-B^2}{CD}=-8+4\frac{C}{D}-4\frac{(B-D)(D+B)}{CD}\\
    \geq&-4-4\frac{\delta(D+B)}{C} \geq -4-8\frac{\delta B}{C}\geq-4-8\frac{\delta B}{D}  \geq-4-8\delta(1+\delta)\\
    =&-4(1+2\delta(1+\delta)) \geq -4(1+6\delta) 
\end{aligned}
$$
which implies
$$
B\geq B_0-4(1+6\delta)t
$$
and
 $$
 \begin{aligned}
     -b(t)-2\overline{B}(t)
     \geq&-\frac{10(1+\delta)^3}{B_0-4(1+6\delta)t}=\frac{5(1+\delta)^3}{2(1+6\delta)}\frac{d}{dt}\ln\left(B_0-4(1+6\delta)t\right)
 \end{aligned}
 $$
 Thence, by using Theorem \ref{Thm:Ric}
$$
\begin{aligned}
\mathcal{T}(\Omega_t)\geq& e^{\int_0^t\left(-2 \overline{B}(s)-b(s)\right)ds}\mathcal{T}(\Omega_0)\geq  \left[1-\frac{4(1+6\delta)t}{B_0}\right]^{\frac{5(1+\delta)^3}{2(1+6\delta)}}\mathcal{T}(\Omega_0).  \mbox{\qedhere}
\end{aligned}
$$

\end{proof}
\begin{remark} In the case $A_0=B_0=C_0=1$ and $0<\epsilon<1$ we obtain the family of Berger spheres.
    Observe that for the case $\delta=0$, the initial metric tensor is the round sphere 
    $$
g=B_0 \theta^1\otimes\theta^1+B_0 \theta^2\otimes\theta^2+B_0\theta^3\otimes\theta^3
    $$
    and We conclude
    $$
\mathcal{T}(\Omega_t)= \left[1-\frac{4t}{B_0}\right]^{\frac{5}{2}}\mathcal{T}(\Omega_0).
    $$
    for any $t\in \left[0,T_{\rm max}=\frac{B_0}{4}\right)$. Observe that in this case $SU(2)$ is a Einstein manifold with $\lambda=\frac{2}{B_0}$. Moreover, 
\end{remark}

\subsection{Example the evolution of the torsional rigidity under IMCF} 
In this subsection we will apply Theorem \ref{teo:unua} to the case of strictly convex free boundary disk-type hypersurface in a ball. The definition of such a hypersurfaces is as follows
\begin{definition}\label{def:fb}
An strictly convex hypersurface with the topology of a disk and satisfying the following equations, 
   \begin{enumerate}[(a)]
       \item $X_0(\partial \mathbb{D})\subset \mathbb{S}^n,$
       \item $\langle \dot\gamma(0),\widetilde{N}\rangle\geq 0,\quad \forall \gamma\in C^1((-\epsilon,0],X_0(\mathbb{D})):\gamma(0)\in \partial X_0(\mathbb{D})$
       \item $\langle N_0\vert_{\partial \mathbb{D}},\widetilde{N}\vert_{\partial \mathbb{D}}\rangle=0$
   \end{enumerate}

Is known as an \emph{strictly convex free boundary disk-type hypersurface in the ball $\mathbb{B}\subset \mathbb{R}^{n+1}$}. An example of this kind of hypersurfaces can be the embedded disk in the center of a sphere. 
\end{definition}

Lambert and Scheuer proved in \cite{Lambert2016inver-34677} that given an embedding $X_0: \mathbb{D}\to \mathbb{R}^{n+1}$ of the $n$ dimensional disk  $\mathbb{D}$ as a strictly convex free boundary disk-type hypersurface in the ball. There exists a family of embeddings  $X_t:\mathbb{D}\to \mathbb{R}^{n+1}$ satisfying IMCF and a finite time $T_{\rm mc}<\infty$ such that the embeddings $X_t$ converge to the embedding of a flat unit disk as $t\to T_{\rm mc}$, as Lambert and Scheuer showed.

Following the ideas of Pak Tung Ho and Juncheol Pyo in \cite{Ho-Pyo} and by using the variational characterization of the the torsional rigidity given in \eqref{eq:torsionalvariation} as a corollary of Theorem \ref{teo:unua} we can state.
\begin{corollary}Let $\Sigma$ be a strictly convex free boundary disk-type hypersurface  in the ball $\mathbb{B}\subset \mathbb{R}^{n+1}$, $\mathcal{T}(\Sigma)$ the torsional rigidity of $\Sigma$ and $\V(\Sigma)$ the volume of $\Sigma$. Then we have
$$
\frac{\mathcal{T}(\Sigma)}{\V^3(\Sigma)}\geq \frac{\mathcal{T}(\mathbb{D})}{\V^3(\mathbb{D})},\quad \frac{\mathcal{T}(\Sigma)}{\V(\Sigma)}\leq \frac{\mathcal{T}(\mathbb{D})}{\V(\mathbb{D})}
$$
where $\mathcal{T}(\mathbb{D})$ and $\V(\mathbb{D})$ are respectively the torsional rigidity and the volume of a unit disk $\mathbb{D}$ passing through the center of the unit ball $\mathbb{B}\subset \mathbb{R}^{n+1}$.
\end{corollary}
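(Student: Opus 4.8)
The plan is to combine the monotonicity statements of Theorem~\ref{teo:unua} with the convergence of the inverse mean curvature flow to the flat disk, as established by Lambert and Scheuer in \cite{Lambert2016inver-34677}. First I would invoke their result: given the embedding $X_0$ of $\Sigma$ as a strictly convex free boundary disk-type hypersurface in the ball, there is an IMCF solution $X_t$ defined on $[0,T_{\rm mc})$ that remains strictly convex for all $t$ and whose image converges, as $t\to T_{\rm mc}$, to a flat unit disk $\mathbb{D}$ passing through the center of $\mathbb{B}$. Thus we may take $\Sigma=\Sigma_0=\varphi_0(M)$ as the initial datum of the flow and regard $\mathbb{D}$ as the limiting hypersurface.

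Next I would apply Theorem~\ref{teo:unua} with the initial domain taken to be the entire hypersurface, $\Omega_0=\Sigma$. That theorem states that $t\mapsto (\V(\Omega_t))^{-3}\mathcal{T}(\Omega_t)$ is non-increasing and $t\mapsto (\V(\Omega_t))^{-1}\mathcal{T}(\Omega_t)$ is non-decreasing on $[0,T_{\rm mc})$. Evaluating the first monotone function at $t=0$ and letting $t\to T_{\rm mc}$ gives
$$
\frac{\mathcal{T}(\Sigma)}{\V^3(\Sigma)}\;\geq\;\lim_{t\to T_{\rm mc}}\frac{\mathcal{T}(\Sigma_t)}{\V^3(\Sigma_t)}\;=\;\frac{\mathcal{T}(\mathbb{D})}{\V^3(\mathbb{D})},
$$
and similarly the second function yields $\mathcal{T}(\Sigma)/\V(\Sigma)\leq \mathcal{T}(\mathbb{D})/\V(\mathbb{D})$. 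The passage to the limit requires the continuity of $t\mapsto\V(\Sigma_t)$ and $t\mapsto\mathcal{T}(\Sigma_t)$ up to $T_{\rm mc}$; this follows from the smooth convergence $X_t\to X_{T_{\rm mc}}$ proved in \cite{Lambert2016inver-34677}, since both the volume (an integral of the metric's density) and the torsional rigidity (via its variational characterization in Proposition~\ref{prop:upper_bounds_torsion}, or equivalently as the integral of the solution of the elliptic Poisson problem \eqref{eq:unua}, which depends continuously on the metric in $C^\infty$) vary continuously under $C^\infty$ convergence of the induced metrics.

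Finally, to deduce the two unconditional inequalities $\V(\Sigma)\leq\V(\mathbb{D})$ and $\mathcal{T}(\Sigma)\leq\mathcal{T}(\mathbb{D})$, I would combine the two scale-invariant inequalities just obtained. Writing $v=\V(\Sigma)$, $t_\Sigma=\mathcal{T}(\Sigma)$ and likewise $v_{\mathbb{D}}$, $t_{\mathbb{D}}$ for the disk, the relations $t_\Sigma/v^3\geq t_{\mathbb{D}}/v_{\mathbb{D}}^3$ and $t_\Sigma/v\leq t_{\mathbb{D}}/v_{\mathbb{D}}$ together force $v^2\leq v_{\mathbb{D}}^2$, hence $v\leq v_{\mathbb{D}}$ since both volumes are positive; feeding this back into $t_\Sigma/v\leq t_{\mathbb{D}}/v_{\mathbb{D}}$ gives $t_\Sigma\leq t_{\mathbb{D}}(v/v_{\mathbb{D}})\leq t_{\mathbb{D}}$. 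The main obstacle I anticipate is justifying the limit $t\to T_{\rm mc}$ rigorously, in particular that the torsional rigidity and volume of $\Sigma_t$ converge to those of the flat disk rather than degenerating; this rests entirely on the regularity and convergence statement of Lambert and Scheuer, so the argument here is really a matter of citing \cite{Lambert2016inver-34677} correctly and noting the continuous dependence of the relevant functionals on the metric. The algebraic step at the end is routine once the limiting comparison inequalities are in hand.
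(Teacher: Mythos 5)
Your proposal is correct and follows essentially the same route as the paper: apply the monotonicity of $t\mapsto \V^{-3}(\Omega_t)\mathcal{T}(\Omega_t)$ and $t\mapsto \V^{-1}(\Omega_t)\mathcal{T}(\Omega_t)$ from Theorem~\ref{teo:unua} along the IMCF of Lambert--Scheuer and pass to the limit $t\to T_{\rm mc}$, where the flow converges to the flat unit disk. In fact you supply more detail than the paper does (the continuity of volume and torsional rigidity under smooth convergence of the metrics, and a scale-free algebraic derivation of $\V(\Sigma)\leq\V(\mathbb{D})$, $\mathcal{T}(\Sigma)\leq\mathcal{T}(\mathbb{D})$, which the paper instead obtains from the explicit values of $\mathcal{T}(\mathbb{D})$ and $\V(\mathbb{D})$), so no gaps to report.
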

\begin{remark}
    The mean exit time function of a unit disk, as it was computed in the example \ref{ex1} , is given by
    $$
x\mapsto E(x)=\frac{1}{2n}\left(1-\vert x\vert ^2\right)
    $$
then, the torsional rigidity is given by
$$
\mathcal{T}(\mathbb{D})=\frac{\omega_{n-1}}{2n}\int_0^1\left(1-r ^2\right)r^{n-1}dr=\frac{\omega_{n-1}}{n^2(2+n)},
$$
where $\omega_{n-1}$ is the volume of the unit $n-1$-dimensional sphere. Moreover
$$
\V(\mathbb{D})=\omega_{n-1}\int_0^1r^{n-1}dr=\frac{\omega_{n-1}}{n},
$$
Thence
$$
\frac{n}{\omega_{n-1}^2(n+2)}\V^3(\Sigma)\leq \mathcal{T}(\Sigma)\leq \frac{1}{n(n+2)}\V(\Sigma)
$$
and we can state
\begin{corollary}
    Let $\Sigma$ be a strictly convex free boundary disk-type hypersurface  in the ball $\mathbb{B}\subset \mathbb{R}^{n+1}$. Then
    $$
\V(\Sigma)\leq \V(\mathbb{D}),
    $$
    and
    $$
\mathcal{T}(\Sigma)\leq \mathcal{T}(\mathbb{D}).
    $$
\end{corollary}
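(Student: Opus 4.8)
The plan is to read the statement off, by elementary algebra, from the two scale-invariant comparison inequalities already established just above (the ones of the form $\mathcal{T}(\Sigma)/\V^3(\Sigma)\geq \mathcal{T}(\mathbb{D})/\V^3(\mathbb{D})$ and $\mathcal{T}(\Sigma)/\V(\Sigma)\leq \mathcal{T}(\mathbb{D})/\V(\mathbb{D})$), which in turn come from the monotonicity of $t\mapsto \V(\Sigma_t)^{-3}\mathcal{T}(\Sigma_t)$ and $t\mapsto \V(\Sigma_t)^{-1}\mathcal{T}(\Sigma_t)$ along the inverse mean curvature flow (Theorem~\ref{teo:unua}) together with the Lambert--Scheuer convergence $\Sigma_t\to\mathbb{D}$ as $t\to T_{\rm mc}$ from \cite{Lambert2016inver-34677}. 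So no new flow estimate is needed here; everything is bookkeeping with the explicit flat-disk constants.

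First I would substitute the closed forms $\mathcal{T}(\mathbb{D})=\omega_{n-1}/(n^2(n+2))$ and $\V(\mathbb{D})=\omega_{n-1}/n$ from the Remark into those two inequalities, obtaining the sandwich
$$
\frac{n}{\omega_{n-1}^2(n+2)}\,\V^3(\Sigma)\ \leq\ \mathcal{T}(\Sigma)\ \leq\ \frac{1}{n(n+2)}\,\V(\Sigma).
$$
Chaining the two ends and using that $\Sigma$ is compact with nonempty interior, so $\V(\Sigma)>0$: multiplying through by $(n+2)$ and dividing by $\V(\Sigma)$ gives $\tfrac{n}{\omega_{n-1}^2}\,\V^2(\Sigma)\leq \tfrac1n$, i.e. $\V^2(\Sigma)\leq \omega_{n-1}^2/n^2$, hence
$$
\V(\Sigma)\ \leq\ \frac{\omega_{n-1}}{n}\ =\ \V(\mathbb{D}),
$$
which is the first claim. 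Feeding this back into the right-hand inequality of the sandwich yields
$$
\mathcal{T}(\Sigma)\ \leq\ \frac{1}{n(n+2)}\,\V(\Sigma)\ \leq\ \frac{1}{n(n+2)}\cdot\frac{\omega_{n-1}}{n}\ =\ \frac{\omega_{n-1}}{n^2(n+2)}\ =\ \mathcal{T}(\mathbb{D}),
$$
which is the second claim.

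There is essentially no obstacle internal to this corollary: the only points requiring a word of care are that the two comparison quantities on the right are genuinely those of the flat disk $\mathbb{D}$ (so that the substitution of explicit constants is legitimate, which is exactly what the convergence of the flow provides, following \cite{Ho-Pyo}) and that $\V(\Sigma)>0$ so the division is valid. All the genuine difficulty sits upstream in Theorem~\ref{teo:unua} and in the Lambert--Scheuer long-time behaviour of the flow; the present statement is the final algebraic consequence. One could equivalently phrase it by saying that the displayed sandwich already pins $\V(\Sigma)$ to lie at most $\V(\mathbb{D})$, after which either of the two bounds, being monotone in $\V(\Sigma)$, transports the inequality to $\mathcal{T}$.
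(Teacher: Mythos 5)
Your proposal is correct and follows essentially the same route as the paper: the paper also substitutes the explicit values $\mathcal{T}(\mathbb{D})=\frac{\omega_{n-1}}{n^2(n+2)}$ and $\V(\mathbb{D})=\frac{\omega_{n-1}}{n}$ into the two flow-derived comparison inequalities to obtain the sandwich $\frac{n}{\omega_{n-1}^2(n+2)}\V^3(\Sigma)\leq \mathcal{T}(\Sigma)\leq \frac{1}{n(n+2)}\V(\Sigma)$, and the corollary is then read off by exactly the algebraic chaining you perform. Your write-up merely makes explicit the division by $\V(\Sigma)>0$ and the back-substitution that the paper leaves implicit.
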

\end{remark}
\begin{remark}
    For every domain $\Omega$ the functional $\Omega\mapsto \frac{\mathcal{T}(\Omega)}{\V(\Omega)^\frac{n+2}{2}}$ is scale invariant. 
It is proved that for any domain of $\mathbb{R}^n$,
$$
\frac{\mathcal{T}(\Omega)}{\V(\Omega)^\frac{n+2}{2}}\leq \frac{\mathcal{T}(\mathbb{D})}{\V(\mathbb{D})^\frac{n+2}{2}}.
$$
Observe that for our results we can state that for strictly convex free boundary disk-type hypersurfaces  in the ball $\mathbb{B}\subset \mathbb{R}^{n+1}$
$$
\frac{\mathcal{T}(\Omega)}{\V(\Omega)^\frac{n+2}{2}}\geq \frac{\mathcal{T}(\mathbb{D})}{\V(\mathbb{D})^\frac{n+2}{2}}\cdot \left(\frac{\V(\Sigma)}{\V(\mathbb{D})}\right)^2
$$
\end{remark}

\end{document}